\newcommand{\nc}{\newcommand}
\nc{\Uca}{\mathcal{U}}
\nc{\RS}{\sigma_{\operatorname{Ric}}} \nc{\REV}{\operatorname{RicEV}}
\nc{\pr}{\operatorname{pr}}
\nc{\Gv}{{\G_{(\vg_i)} } }   \nc{\ggov}{{\ggo_{(\vg_i)} }}   \nc{\Gvt}{{\G^t_{(\vg_i)} } } 
\nc{\ig}{\mathfrak{i}}
\nc{\Gl}{\mathsf{GL}} \nc{\Or}{\mathsf{O}}  \nc{\SO}{\mathsf{SO}}   \nc{\Sl}{\mathsf{SL}}
\nc{\G}{\mathsf{G}} \nc{\K}{\mathsf{K}}  \nc{\T}{\mathsf{T}} \nc{\Lsf}{\mathsf{L}}
\nc{\Qb}{\mathsf{Q}_\Beta} \nc{\Hb}{\mathsf{H}_\Beta} \nc{\Ub}{\mathsf{U}_\Beta}
\nc{\Gb}{\mathsf{G}_\Beta} \nc{\Kb}{\mathsf{K}_\Beta}
\nc{\PPP}{\mathsf{P}} \nc{\U}{\mathsf{U}} \nc{\N}{\mathsf{N}} \nc{\Ss}{\mathsf{S}} \nc{\Aa}{\mathsf{A}}
\nc{\Hh}{\mathsf{H}}
\nc{\la}{\langle} \nc{\ra}{\rangle}
\nc{\laH}{\la\!\la} \nc{\raH}{\ra\!\ra}
\nc{\ipH}{{\laH \cdot, \cdot \raH}}
\nc{\iph}{{\la h \cdot , h \cdot \ra}}
\nc{\Vg}{{V(\ggo)}}
\nc{\alert}{\color{blue}}
\nc{\fg}{\mathfrak{f}}  \nc{\vg}{\mathfrak{v}} \nc{\wg}{\mathfrak{w}} \nc{\zg}{\mathfrak{z}} \nc{\ngo}{\mathfrak{n}} \nc{\kg}{\mathfrak{k}} \nc{\mg}{\mathfrak{m}} \nc{\bg}{\mathfrak{b}} \nc{\ggo}{\mathfrak{g}} \nc{\ggob}{\overline{\mathfrak{g}}} \nc{\sog}{\mathfrak{so}} \nc{\sug}{\mathfrak{su}} \nc{\spg}{\mathfrak{sp}} \nc{\slg}{\mathfrak{sl}} \nc{\glg}{\mathfrak{gl}} \nc{\cg}{\mathfrak{c}} \nc{\rg}{\mathfrak{r}}  \nc{\hg}{\mathfrak{h}} \nc{\tgo}{\mathfrak{t}} \nc{\ug}{\mathfrak{u}} \nc{\dg}{\mathfrak{d}} \nc{\ag}{\mathfrak{a}} \nc{\pg}{\mathfrak{p}} \nc{\sg}{\mathfrak{s}} \nc{\affg}{\mathfrak{aff}} \nc{\qg}{\mathfrak{q}}
\nc{\Xg}{\mathfrak{X}} \nc{\lgo}{\mathfrak{l}}
\nc{\pca}{\mathcal{P}} \nc{\nca}{\mathcal{N}} \nc{\lca}{\mathcal{L}} \nc{\oca}{\mathcal{O}} \nc{\mca}{\mathcal{M}} \nc{\tca}{\mathcal{T}} \nc{\aca}{\mathcal{A}} \nc{\cca}{\mathcal{C}} \nc{\gca}{\mathcal{G}} \nc{\sca}{\mathcal{S}} \nc{\hca}{\mathcal{H}} \nc{\bca}{\mathcal{B}} \nc{\dca}{\mathcal{D}}
\nc{\vp}{\varphi} \nc{\ddt}{\tfrac{{\rm d}}{{\rm d}t}} \nc{\dds}{\tfrac{{\rm d}}{{\rm d}s}} \nc{\ddtbig}{\frac{{\rm d}}{{\rm d}t}} \nc{\dd}{{\rm d}}
\nc{\dpar}{\tfrac{\partial}{\partial t}} \nc{\im}{\mathtt{i}}
 \nc{\SU}{\mathsf{SU}} 
\nc{\RR}{{\mathbb R}} \nc{\HH}{{\mathbb H}} \nc{\CC}{{\mathbb C}} \nc{\ZZ}{{\mathbb Z}}
\nc{\FF}{{\mathbb F}} \nc{\NN}{{\mathbb N}} \nc{\QQ}{{\mathbb Q}} \nc{\PP}{{\mathbb P}}
\nc{\vs}{\vspace{.2cm}} \nc{\vsp}{\vspace{1cm}} \nc{\ip}{{\langle\cdot,\cdot\rangle}}
\nc{\ipp}{(\cdot,\cdot)} \nc{\unm}{\tfrac{1}{2}}
\nc{\unc}{\tfrac{1}{4}} \nc{\und}{\tfrac{1}{16}} \nc{\no}{\vs\noindent}
\nc{\lam}{\Lambda^2(\RR^n)^*\otimes\RR^n} \nc{\tangz}{{\rm T}^{\rm Zar}}
\nc{\lamg}{\Lambda^2\ggo^*\otimes\ggo}
\nc{\nor}{{\sf n}}  \nc{\mum}{/\!\!/} \nc{\kir}{/\!\!/\!\!/}
\nc{\Ri}{\tfrac{4\Ric_{\mu}}{||\mu||^2}} \nc{\ds}{\displaystyle}
\nc{\lb}{[\cdot,\cdot]} \nc{\isn}{\tfrac{1}{||v||^2}}
\nc{\gkp}{(\ggo=\kg\oplus\pg,\ip)} \nc{\ukh}{(\ug=\kg\oplus\hg,\ip)}
\nc{\tgkp}{(\tilde{\ggo}=\kg\oplus\pg,\ip)}
\nc{\wt}{\widetilde}
\nc{\raw}{\rightarrow} \nc{\lraw}{\longrightarrow} \nc{\hqn}{\mathcal{H}_{q,n}}
\nc{\Spec}{\operatorname{Spec}}
\nc{\ad}{\operatorname{ad}}  \nc{\Aut}{\operatorname{Aut}}   \nc{\Inn}{\operatorname{Inn}}   \nc{\Lie}{\operatorname{Lie}} \nc{\Ad}{\operatorname{Ad}} \nc{\Der}{\operatorname{Der}} \nc{\rad}{\operatorname{rad}} \nc{\kf}{\operatorname{B}}
\nc{\End}{\operatorname{End}} \nc{\rank}{\operatorname{rank}} \nc{\Ker}{\operatorname{Ker}} \nc{\tr}{\operatorname{tr}} \nc{\sym}{\operatorname{sym}} \nc{\diag}{\operatorname{diag}} \nc{\proy}{\operatorname{pr}} \nc{\Adj}{\operatorname{Adj}} \nc{\vspan}{\operatorname{span}}
\nc{\Hess}{\operatorname{Hess}}  \nc{\dif}{\operatorname{d}} \nc{\sen}{\operatorname{sen}} \nc{\grad}{\operatorname{grad}} \nc{\Order}{\operatorname{O}} \nc{\divg}{\operatorname{div}}
\nc{\Iso}{\operatorname{Iso}} \nc{\Diff}{\operatorname{Diff}} \nc{\Rc}{\operatorname{Rc}} \nc{\Ricci}{\operatorname{Ric}} \nc{\Riem}{\operatorname{Rm}} \nc{\scalar}{\operatorname{sc}} \nc{\scalarm}{\hat{\operatorname{R}}} \nc{\Riccim}{\widehat{\operatorname{Ric}}} \nc{\tang}{\operatorname{T}} \nc{\vol}{\operatorname{vol}}
\nc{\mm}{\operatorname{M}} \nc{\CH}{\operatorname{CH}} \nc{\Irr}{\operatorname{Irr}} \nc{\mcc}{\operatorname{mcc}} \nc{\m}{\operatorname{m}}
\nc{\Id}{\operatorname{Id}}  \nc{\mmm}{\operatorname{m}}
\theoremstyle{plain}
\newtheorem{theorem}{Theorem}[section]
\newtheorem{proposition}[theorem]{Proposition}
\newtheorem{corollary}[theorem]{Corollary}
\newtheorem{lemma}[theorem]{Lemma}
\newtheorem{teointro}{Theorem}
\newtheorem{corointro}[teointro]{Corollary}
\theoremstyle{definition}
\theoremstyle{plain}
\newtheorem*{conjecture*}{Conjecture}
\newtheorem{conjecture}{Conjecture}
\theoremstyle{remark}
\newtheorem{remark}[theorem]{Remark}
\newtheorem{example}[theorem]{Example}
\title[On the signature of the Ricci curvature on nilmanifolds]{On the signature of the Ricci curvature on nilmanifolds}
\author{Romina M.~Arroyo}
\address{FaMAF $\&$ CIEM, Universidad Nacional de C\'ordoba, Av. Medina Allende s/n, Ciudad Universitaria, CP:X5000HUA, C\'ordoba, Argentina}
\email{arroyo@famaf.unc.edu.ar}
\author{Ramiro A.~Lafuente}
\address{School of Mathematics and Physics, The University of Queensland, St Lucia QLD 4072, Australia}
\email{r.lafuente@uq.edu.au}
\thanks{The first named author was partially supported by grants from the Australian Government through the Australian Research Council's Discovery Projects funding scheme (project DP180102185), CONICET, FONCYT and SeCyT (Universidad Nacional C\'ordoba). The second named author is an ARC DECRA fellow.
 }
\begin{document}

\begin{abstract}
We completely describe the signatures of the Ricci curvature of left-invariant Riemannian metrics on  arbitrary real nilpotent Lie groups. The main idea in the proof is to exploit a link between the kernel of the Ricci endomorphism and closed orbits in a certain representation of the general linear group, which we prove using the `real GIT' framework for the Ricci curvature of nilmanifolds. 
\end{abstract}

\maketitle


\section{Introduction}

A classical problem in Riemannian geometry is to determine the possible signatures of the Ricci curvature on a given space. For instance, the Bonnet-Myers theorem \cite{Mye41} states that a complete Riemannian manifold with $\Ricci \geq c > 0$ is compact and has finite fundamental group. On the other hand, any smooth manifold of dimension $n\geq 3$ admits a complete metric with $\Ricci<0$ \cite{Loh94}. 

In this article, we consider the problem under symmetry assumptions. More precisely, given a homogeneous space $M^n = \G/\Hh$, 
we are interested in the set
\[
	\RS(\G / \Hh) := \left\{ \sigma(\Ricci(g))  :  g \hbox{ $\G$-invariant Riemannian metric on }\G/\Hh \right\},
\]
where $\sigma(\Ricci(g)) = (s^-,s^0, s^+) \in \ZZ_{\geq 0}^3$ denotes the signature of the symmetric $(0,2)$-tensor  $\Ricci(g)$. Even under the homogeneity assumption, a complete description of this set turns out to be elusive in most cases. Partial results include Bochner's theorem ($(n,0,0)\notin \RS(\G/\Hh)$ for compact $\G$), and the classification of Lie groups $\G$ admitting a left-invariant metric with $\Ricci \geq 0$ \cite{BB}. 
The only semisimple Lie groups (up to covering) for which $\RS(\G)$ is known are $\Sl_2(\RR)$ and $\SU(2)$ \cite{Mln}. In fact, it is unknown whether $(6,0,0) \in \RS(\Sl_2(\CC))$.
 On the other hand, the recent literature suggests that determining whether $(n,0,0) \in \RS(\G)$ for solvable $\G$  could be out of reach, see \cite{LW19} and the references therein. 

We focus on the case where $\G = \N$ is a connected nilpotent Lie group, with $\Hh$  trivial by effectivenes.
Our main result is a complete description of $\RS(\N)$ for all such $\N$, in terms of purely Lie-theoretic data:

\begin{teointro}\label{thm_main}
The set of signatures of the Ricci curvature of left-invariant metrics on a connected nilpotent Lie group $\N$ with Lie algebra $(\ngo,[\cdot,\cdot])$ is given by 
\begin{align*}
	\RS(\N) \, &=  \, \bigcup_{r=0}^{\min(a_\ngo,m_\ngo)}  \left\{ (s^-,s^0,s^+) \, :  \,  
	 s^-  \geq u_\ngo{+}r ,  \,\, s^0 \geq a_\ngo{-}r, \,\, s^+ \geq z_\ngo {+} r , \, \,  s^- {+} s^0 {+} s^+ = \dim \ngo \right\}. 
\end{align*}
Here, $\zg(\ngo)$ denotes the center of $\ngo$, and  $u_\ngo, a_\ngo, z_\ngo, m_\ngo \in \ZZ_{\geq 0}$ are defined by
\begin{align*}
	 u_\ngo :=& \dim \ngo- \dim \left([\ngo,\ngo] + \zg(\ngo) \right), \qquad
	 & a_\ngo := \dim \zg(\ngo) - \dim \left(\zg(\ngo)\cap [\ngo,\ngo] \right), \\
 	z_\ngo  :=& \dim \left(\zg(\ngo)\cap [\ngo,\ngo] \right), \qquad
 	& m_\ngo := \dim [\ngo,\ngo] - \dim \left(\zg(\ngo)\cap [\ngo,\ngo] \right).
\end{align*}
\end{teointro}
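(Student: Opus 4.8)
The plan is to first recast the target set into a more tractable form and then split the argument into an elementary necessity part and a harder sufficiency part, the latter being where the GIT input enters. Using $u_\ngo + a_\ngo + z_\ngo + m_\ngo = \dim\ngo$, one checks that the stated union over $r$ is equivalent to the two interval conditions
\[
	u_\ngo \leq s^- \leq u_\ngo + m_\ngo, \qquad z_\ngo \leq s^+ \leq z_\ngo + m_\ngo,
\]
together with $s^- + s^0 + s^+ = \dim\ngo$ (the remaining constraints $0\leq s^0$ and $s^0 \geq a_\ngo - m_\ngo$ being then automatic, since $s^0 = \dim\ngo - s^- - s^+ \geq a_\ngo - m_\ngo$). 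So it suffices to prove that $\RS(\N)$ is exactly the set of signatures obeying these two intervals.

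For necessity I would use the standard Ricci formula for a nilmanifold, written as $\Ricci = -C + B$, where for an orthonormal basis $\{e_i\}$ of $(\ngo,\ip)$ the self-adjoint operators are $\la CX, Y\ra = \tfrac12 \sum_i \la [X,e_i],[Y,e_i]\ra$ and $\la BX, Y\ra = \tfrac14 \sum_{i,j} \la [e_i,e_j], X\ra \la [e_i,e_j], Y\ra$. Both are positive semidefinite, with $\ker C = \zg(\ngo)$, while $B$ is supported on $[\ngo,\ngo]$ and positive definite there. The four bounds follow by exhibiting maximal definite subspaces: $\Ricci$ is negative definite on $([\ngo,\ngo]+\zg(\ngo))^\perp$ (dimension $u_\ngo$), giving $s^- \geq u_\ngo$; positive semidefinite on $\zg(\ngo)$, giving $s^- \leq \dim\ngo - \dim\zg(\ngo) = u_\ngo + m_\ngo$; positive definite on $\zg(\ngo)\cap[\ngo,\ngo]$ (dimension $z_\ngo$), giving $s^+ \geq z_\ngo$; and negative semidefinite on $[\ngo,\ngo]^\perp$, giving $s^+ \leq \dim[\ngo,\ngo] = z_\ngo + m_\ngo$. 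Each inequality is the elementary fact that a positive-definite and a negative-semidefinite subspace intersect trivially.

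The substance is sufficiency: realizing every signature in the box by some metric. I would anchor the construction at a single distinguished metric $g_0$ whose Ricci operator has the maximal possible kernel, namely $\dim\ker\Ricci = a_\ngo + m_\ngo$, with $\Ricci$ negative definite on a $u_\ngo$-dimensional complement to $[\ngo,\ngo]+\zg(\ngo)$ and positive definite on $\zg(\ngo)\cap[\ngo,\ngo]$; this is the corner $(s^-,s^0,s^+) = (u_\ngo,\, a_\ngo+m_\ngo,\, z_\ngo)$. The existence of $g_0$ is exactly where I would invoke the link between $\ker\Ricci$ and closed orbits: passing to the $\Gl(\ngo)$-orbit of the bracket $\mu=[\cdot,\cdot]$ and using that $\Ricci_\mu$ is, up to normalization, the moment map of the $\Gl(\ngo)$-representation on brackets, the maximal-kernel condition corresponds to $\mu$ (or its image in a naturally associated quotient representation) lying in a closed orbit, whose existence is supplied by the real GIT framework. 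From $g_0$ I would then deform: a small change of inner product perturbs the $(a_\ngo+m_\ngo)$-dimensional null block of $\Ricci$, and I would show the resulting family of symmetric forms on $\ker\Ricci$ is flexible enough to produce any admissible splitting of these zero eigenvalues, subject only to the two structural caps already visible in necessity -- at most $m_\ngo$ can go negative (the center never leaves $\{\Ricci\geq 0\}$) and at most $m_\ngo$ can go positive (positive directions stay inside $[\ngo,\ngo]$). Sweeping over all admissible splittings then realizes every $(u_\ngo+p,\, a_\ngo+m_\ngo-p-q,\, z_\ngo+q)$ with $0\leq p,q\leq m_\ngo$ and $p+q\leq a_\ngo+m_\ngo$, which is precisely the box.

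The main obstacle is the existence and use of the anchor metric $g_0$. Because a general nilpotent Lie algebra need not admit a nice basis, I cannot diagonalize $\Ricci$ in explicit coordinates and tune eigenvalues by hand; the maximal-kernel metric must be produced abstractly, which is exactly what forces the real GIT machinery and the identification of $\ker\Ricci$ with a closed-orbit locus. A secondary difficulty is making the deformation step rigorous: I must verify that the variation of $\Ricci$ transverse to the orbit surjects onto the admissible forms on the null space, so that no signature in the box is missed and none outside it is spuriously produced. Here the positivity of $B$ on $[\ngo,\ngo]$ and the vanishing of $C$ on $\zg(\ngo)$ are what confine the deformation to the prescribed intervals.
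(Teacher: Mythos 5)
Your necessity half is correct, and your ``box'' reformulation of the theorem's union is a genuinely nice simplification: one checks that the union over $r$ equals $\{u_\ngo \leq s^- \leq u_\ngo + m_\ngo,\ z_\ngo \leq s^+ \leq z_\ngo + m_\ngo,\ s^-+s^0+s^+=\dim\ngo\}$ by taking $r = \max(0,\, p+q-m_\ngo)$ for $p = s^- - u_\ngo$, $q = s^+ - z_\ngo$, and your four decoupled bounds then recover the paper's Lemma \ref{lem_easydirection} (which proves the coupled bounds with $r_\mu$ directly) from the decomposition \eqref{eqn_Ricpq}. The first genuine gap is in producing the anchor metric with $\sigma(\Ricci) = (u_\ngo,\, a_\ngo+m_\ngo,\, z_\ngo)$. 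Your appeal to ``the real GIT framework'' supplying a closed orbit cannot work as stated: the $\Gl(\ngo)$-orbit of a non-abelian nilpotent bracket is \emph{never} closed (e.g.\ $\exp(tI)\cdot\mu = e^{-t}\mu \to 0$), and no reductive-group Kempf--Ness statement, nor any evident ``quotient representation,'' yields a metric whose Ricci kernel contains a prescribed complement $\ag\oplus\mg$. The paper's key idea, which your sketch is missing, is to introduce the specific \emph{non-reductive} subgroup $\Gv$ of \eqref{eqn_Gv} (fixing $\vg_1=\ag$ and $\vg_2 = \ug\oplus\zg_1$ pointwise and mapping $\vg_3=\mg$ into $\vg_2\oplus\vg_3$), to prove closedness of $\Gv\cdot\mu$ by hand using nilpotency twice (Lemma \ref{lem_subalgebra} for the bounded-subalgebra argument, and the fact that a nonzero ideal meets the center, in Lemma \ref{lem_closedorb}), and then to use only the easy direction of Kempf--Ness (Proposition \ref{prop_closedzeroes}), which is valid for arbitrary subgroups; orthogonality $\Ricci_{\bar\mu}\perp\ggov$ then forces $\mg\subset\ker\Ricci_{\bar\mu}$ (Corollary \ref{cor_Riczero}).

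The second gap is that your deformation step, read literally, is false: the family of attainable forms on $\ker\Ricci_{\bar\mu}$ does \emph{not} realize ``any admissible splitting of the zero eigenvalues.'' Any symmetric endomorphism supported on $\ag$ and vanishing on $\ngo_1$ is a derivation of $\bar\mu$ (its image is central, and $\bar\mu(\ngo,\ngo)\subset\ngo_1$ lies in its kernel), hence lies in $\ker L_{\bar\mu} = \Der(\bar\mu)\cap\pg$; since $L_{\bar\mu}$ is self-adjoint, the first variation of the $\ag\times\ag$ block of $\Ricci$ vanishes identically, so you can neither prescribe it nor split zero eigenvalues inside $\ag$ directly. (Indeed some obstruction \emph{must} be present, since surjectivity onto all forms on the kernel would let you violate your own caps.) The paper's mechanism is finer: surjectivity holds only onto $\sg = \{A = A^T :\ A|_{\vg_2}=0,\ A(\vg_1)\subset\vg_3\}$ (Lemmas \ref{lem_Lsurj} and \ref{lem_sDer0}), i.e.\ one prescribes the $\mg\times\mg$ block and the \emph{off-diagonal} $\ag\times\mg$ block; a rank-$r$ off-diagonal pairing creates hyperbolic pairs contributing $(r,0,r)$ --- this is precisely where the coupling $s^-\geq u_\ngo + r$, $s^+\geq z_\ngo + r$ (the correction to the conjecture of Section \ref{sec_counter}) comes from --- while the uncontrolled $X_{11}$ block is neutralized by the homotopy-invariance of the signature of $\left[\begin{smallmatrix} X_{11} & \Id \\ \Id & 0\end{smallmatrix}\right]$. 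Finally, to keep $a_\ngo - r$ directions \emph{exactly} null under perturbation one cannot simply deform: the paper first splits off a flat Riemannian factor $\RR^{a_\ngo-r}$ and runs the whole argument on the complementary ideal with $r = a$. Without these three ingredients (the restricted target space $\sg$, the hyperbolic-pair trick, and the flat-factor reduction), your sweep neither enforces the coupling nor attains the exact values of $s^0$.
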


In particular, we have the following

\begin{corointro}\label{cor_main}
If $\N$ is nilpotent and its  Lie algebra $\ngo$ satisfies $\zg(\ngo) \subset [\ngo,\ngo]$, then 
\[
	\RS(\N)  =  \left\{ (s^-,s^0,s^+)  \, :\,  s^- \geq u_\ngo, \,\, s^0\geq 0, \,\, s^+ \geq z_\ngo, \,\, s^- {+} s^0 {+} s^+ = \dim \ngo \right\}.
\]
\end{corointro}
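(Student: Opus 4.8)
The plan is to obtain this corollary as a direct specialization of Theorem~\ref{thm_main} to the case $\zg(\ngo) \subset [\ngo,\ngo]$. First I would record how the hypothesis affects the four Lie-theoretic invariants. The inclusion $\zg(\ngo) \subset [\ngo,\ngo]$ yields $\zg(\ngo) \cap [\ngo,\ngo] = \zg(\ngo)$, whence
\[
a_\ngo = \dim\zg(\ngo) - \dim\bigl(\zg(\ngo)\cap[\ngo,\ngo]\bigr) = 0 .
\]
The invariants $u_\ngo$ and $z_\ngo$ keep their general definitions and reappear unchanged in the corollary, so I would not need to simplify them further.

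Next I would use $a_\ngo = 0$ to collapse the union. Since $a_\ngo$ and $m_\ngo$ are both nonnegative, $\min(a_\ngo,m_\ngo)=0$, and the union in Theorem~\ref{thm_main} therefore consists of the single index $r=0$. Substituting $r=0$ into the defining inequalities turns them into $s^- \geq u_\ngo$, $s^0 \geq a_\ngo = 0$, and $s^+ \geq z_\ngo$, under the constraint $s^- + s^0 + s^+ = \dim\ngo$. The middle inequality is automatically satisfied, since any signature entry is nonnegative, and what survives is exactly the set asserted in the corollary.

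I do not expect any genuine obstacle in this argument: it is a purely arithmetic consequence of the formula established in Theorem~\ref{thm_main}. The only point worth a moment's care is to verify that the hypothesis acts by forcing $a_\ngo = 0$, thereby reducing the index set to a single point, rather than by altering the bounds $u_\ngo$ and $z_\ngo$; this is precisely the mechanism that produces the simplified description.
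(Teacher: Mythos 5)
Your proposal is correct and matches the paper's intent exactly: the corollary is presented there as an immediate consequence of Theorem~\ref{thm_main}, obtained precisely by noting that $\zg(\ngo)\subset[\ngo,\ngo]$ forces $a_\ngo=0$, which collapses the union to the single index $r=0$. No gap to report.
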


This applies for example when $\ngo$ is irreducible, i.e.~not a direct sum of proper ideals. Indeed, any nilpotent Lie algebra  is a sum of ideals $\ngo = \RR^a \oplus \ngo_1$ with $\zg(\ngo_1) \subset [\ngo_1, \ngo_1]$ and $\RR^a$ abelian. Recall that 
the Ricci curvature of a left-invariant metric  $g$ on a nilpotent Lie group satisfies 
\begin{equation*}
	 \Ricci(X,X) = -\unm \sum_{i,j} g\left( [X,X_i], X_j \right)^2  + \unc \sum_{i,j}  g \left( [X_i, X_j], X \right)^2, 
\end{equation*}
see e.g.~\cite[(7.33)]{Bss}. Here $X\in \ngo$ is a left-invariant vector field and $\{X_i\}$ is a left-invariant $g$-orthonormal frame. It immediately follows that $\Ricci |_{\zg(\ngo)} > 0$, $\Ricci|_{[\ngo,\ngo]^\perp} < 0$, again assuming $\zg (\ngo) \subset [\ngo,\ngo]$ (cf.~ \cite{CHN17}). Thus, $s^+ \geq  z_\ngo$ and $s^- \geq u_\ngo$, for all $(s^-,s^0,s^+) \in \RS(\N)$.  Corollary \ref{cor_main} states that, besides these obvious restrictions, the Ricci curvature can take arbitrary signatures.


The signatures of the Ricci curvature on non-compact connected Lie groups $\G$ have been extensively investigated in the literature. In this case, we sometimes denote it by $\RS(\ggo)$, where $\ggo$ is the Lie algebra of $\G$. Wolf proved in \cite{Wolf69} that solvable $\ggo$ do not admit non-flat left-invariant metrics with $\Ricci \geq 0$. If $\ngo$ is nilpotent not abelian, then every $(s^-,s^0,s^+) \in \RS(\ngo)$ satisfies $s^-, s^+ \geq 1$  \cite{Mln} and $s^- \geq 2$ \cite{ChNik12}.  A complete characterisation of $\RS(\sg)$ for $\sg$ unimodular and two-step solvable was obtained by Dotti \cite{Dotti82}. In \cite{DBW16} the authors determined $\RS(\ngo)$ for nilpotent Lie algebras admitting a \emph{nice basis} and satisfying $m=1$ (in the notation of Theorem \ref{thm_main}) and $\zg(\ngo) \subset [\ngo,\ngo]$. They also computed $\RS(\ngo)$ for $\ngo$ nilpotent of dimension up to $6$, although we believe their classification contains some mistakes, see Section \ref{sec_counter}. Other previous results in low dimensions include Milnor's article solving the $3$-dimensional case \cite{Mln}, work by Kremlev and Nikonorov dealing with the problem in dimension $4$ \cite{KrmNkn1,KrmNkn2}, and Kremlev's resolution of the problem for nilpotent Lie algebras of dimension $5$ \cite{Krm}. The particular case of Ricci negative left-invariant metrics has attracted substantial attention in recent years, see for instance \cite{JblPet14,DttLtMtl,NN15,DL19,LW19,Will17,Will19}.

The proof of Theorem \ref{thm_main} in the case  $\zg(\ngo) \subset [\ngo,\ngo]$ has two main ingredients. Firstly, by an `Implicit Function Theorem'-kind of argument it suffices to show that $(u,m,z) \in \RS(\ngo)$. Secondly, in order to show that there exist a metric whose Ricci curvature has an $m$-dimensional radical, we apply a result relating closed orbits in the $\Gl(\ngo)$-representation space $\Lambda^2(\ngo^*) \otimes \ngo$, to zeroes of  Ricci (Proposition \ref{prop_closedzeroes}). The later relies on the `moment map' interpretation of the Ricci curvature on nilpotent Lie groups \cite{minimal}, and is inspired by the Kempf-Ness theorem \cite{KN79} (see also \cite{RS90} for the real version). Remarkably, it can also be applied to non-reductive subgroups of $\Gl(\ngo)$, and this is crucial in the proof.

In the general case $\ngo = \RR^a \oplus \ngo_1$, $\zg(\ngo_1) \subset [\ngo_1, \ngo_1]$, the proof goes essentially along the same lines. However, it  is sligthly more technical as one needs to keep track of the `angle' between the two summands $\RR^a$ and $\ngo_1$, which need not be orthogonal. This is captured by the variable $r$ in the statement.

The article is organised as follows. In Section \ref{sec_Ric} we review some basic facts about the Ricci curvature of nilmanifolds, and prove the easier inclusion in Theorem \ref{thm_main}. Section \ref{sec_counter} contains a $5$-dimensional counterexample to a conjecture stated in \cite{DBW16}. In Section \ref{sec_closedorbits} we show that certain orbits are closed in the representation space $\Lambda^2(\ngo^*)\otimes \ngo$. We then apply these results in Section \ref{sec_Riczeroes} to produce zeroes of the Ricci curvature. Finally, after establishing key properties of the linearisation of the Ricci curvature map in Section \ref{sec_linRic}, we prove our main result in Section \ref{sec_proof}.

\vs \noindent {\it Acknowledgements.} The authors would like to thank Christoph B\"ohm for useful comments on a first draft of this article. Part of this research was carried out while the first named author was a Postdoctoral Research Fellow at The University of Queensland. She is very grateful to the staff and students of the School of Mathematics \& Physics for their kindness and hospitality.

\section{The Ricci curvature of nilmanifolds}\label{sec_Ric}

In this section we review some well-known formulae for the Ricci curvature of left-invariant Riemannian metrics on nilpotent Lie groups. 

Let $\N$ be a connected real nilpotent Lie group with Lie algebra $\ngo$. Given a left-invariant metric $g$ on $\N$, its Ricci curvature tensor $\Ricci_g \in S^2(T^* \N)$ is also left-invariant. Hence both tensors are determined by their value at the identity $e\in \N$:
\[
	g(e) =: \ip  \in S^2(\ngo^*) , \qquad \Ricci_g(e) =: \Ricci_{\ip}  \in S^2(\ngo^*).
\]
It is well known (see \cite{Dotti82,Bss,soliton}) that for nilpotent $\ngo$ the Ricci curvature is given by
\begin{align}
	 \Ricci_\ip(X,Y) 
	 =& -\unm \sum_{i,j} \la \mu(X,X_i), X_j \ra \, \la  \mu(Y,X_i), X_j\ra \label{eqn_formulaRic} \\
	 & + \unc \sum_{i,j} \la \mu(X_i, X_j), X\ra \, \la \mu(X_i,X_j), Y \ra , \qquad X,Y\in \ngo, \nonumber
\end{align}
where $\{X_i \}$ denotes an arbitrary $\ip$-orthonormal basis for $\ngo$ and $\mu \in \Lambda^2(\ngo^*) \otimes \ngo$ denotes the Lie bracket. In coordinates, using the structure coefficients $\mu(X_i,X_j) = \mu_{ij}^k X_k $  (summation convention over repeated indices being used), one has
\begin{equation}\label{eqn_Ric_mu}
	\Ricci_\ip(X_r,X_s) = -\unm  \, \mu_{ri}^j \mu_{si}^j + \unc  \, \mu_{ij}^r \mu_{ij}^s =: \la \Ricci_\mu X_r, X_s \ra.
\end{equation}
In other words, the above implicitly defines $\Ricci_\mu$, an endomorphism of $\ngo$ whose matrix representation in the basis $\{X_i \}$ is also the matrix representation of the bilinear form $\Ricci_\ip$ in that basis.

Notice that when changing the metric $\ip$ for another  scalar product $\iph$, $h\in \Gl(\ngo)$, we may take as orthonormal basis $\{ h^{-1}X_i\}$, and the corresponding Ricci curvature will satisfy 
\[
	 \Ricci_\iph (h^{-1} X_r, h^{-1} X_s) = -\unm  \, (h\cdot \mu)_{ri}^j (h\cdot \mu)_{si}^j + \unc  \, (h\cdot \mu)_{ij}^r (h\cdot \mu)_{ij}^s =: \left\la \Ricci_{h\cdot \mu} X_r, X_s \right\ra ,
\]
The coefficients $(h\cdot \mu)_{ij}^k$ are of course the structure coefficients of $\mu$ with respect to the basis $\{h^{-1}X_i\}$. These coincide with the structure constants of $h\cdot \mu$  with respect to the \emph{original} basis $\{ X_i \}$, $(h \cdot \mu)(X_i, X_j) = (h\cdot \mu)_{ij}^k X_k$. Here, $h\cdot \mu$ denotes the standard 'change of basis' action of $\Gl(\ngo)$ on $\Lambda^2(\ngo^*)\otimes \ngo$, given by
\begin{equation}\label{eqn_action}
	(h \cdot \mu)(\cdot,\cdot) := h \mu(h^{-1} \cdot, h^{-1} \cdot), \qquad h\in \Gl(\ngo), \quad \mu \in \Lambda^2(\ngo^*)\otimes \ngo.
\end{equation}

Observe that we may write the Ricci curvature as a difference
\begin{equation}\label{eqn_Ricpq}
	\Ricci_\ip =  -  q_\ip +  p_\ip,
\end{equation}
where 
\begin{equation}\label{eqn_pq}
\begin{aligned}
	 q_\ip (X, Y) &=  \unm \la \ad_\mu X, \ad_\mu Y\ra,  \\
	 p_\ip (X, Y) &= \unc  \sum_{i,j} \la \mu(X_i,X_j), X\ra\la \mu(X_i,X_j), Y\ra, \qquad X,Y\in \ngo, 
\end{aligned}
\end{equation}
are positive semi-definite bilinear forms, with radicals $\rad (p_\ip) = \mu(\ngo,\ngo)^\perp$, $\rad (q_\ip) = \zg(\ngo, \mu)$. (Recall that the radical of a symmetric bilinear form $b(\cdot, \cdot)\in S^2(W^*)$ is the set $\rad(b) := \{ v \in W : b(v,w) = 0, \,\, \forall w\in W \}$;   for a semi-definite form one has $\rad(b) = \{ w\in W : b(w,w) = 0 \}$.) Equation \eqref{eqn_Ricpq} immediately yields

\begin{lemma}\label{lem_kerRic}
Any scalar product $\ip$ on a nilpotent Lie algebra $(\ngo,\mu)$ satisfies
\[
	\zg(\ngo,\mu) \cap \mu(\ngo,\ngo)^\perp \subset \rad \Ricci_\ip.
\]
\end{lemma}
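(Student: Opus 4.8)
The plan is to read the inclusion straight off the splitting \eqref{eqn_Ricpq}, $\Ricci_\ip = -q_\ip + p_\ip$, combined with the two radical identifications $\rad(q_\ip) = \zg(\ngo,\mu)$ and $\rad(p_\ip) = \mu(\ngo,\ngo)^\perp$ recorded just above the statement. The only genuinely general ingredient is the trivial observation that for any two symmetric bilinear forms $b_1, b_2$ one has $\rad(b_1) \cap \rad(b_2) \subseteq \rad(b_1 - b_2)$: if $b_1(X,\cdot)$ and $b_2(X,\cdot)$ both vanish identically, then so does $(b_1 - b_2)(X,\cdot)$ by bilinearity. Applying this with $b_1 = p_\ip$ and $b_2 = q_\ip$ gives $\rad(p_\ip) \cap \rad(q_\ip) \subseteq \rad(\Ricci_\ip)$, which is exactly the claim once the two radicals are identified with $\zg(\ngo,\mu)$ and $\mu(\ngo,\ngo)^\perp$.

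First I would verify those two radical identifications, which is where positive semi-definiteness enters. Since $q_\ip$ and $p_\ip$ are positive semi-definite, their radicals coincide with the null-cones of the associated quadratic forms. From \eqref{eqn_pq} we have $q_\ip(X,X) = \unm \|\ad_\mu X\|^2$ (Hilbert--Schmidt norm), which vanishes iff $\ad_\mu X = 0$, i.e.\ iff $X \in \zg(\ngo,\mu)$; and $p_\ip(X,X) = \unc \sum_{i,j} \la \mu(X_i,X_j), X\ra^2$, which vanishes iff $\la \mu(X_i,X_j), X\ra = 0$ for all $i,j$, i.e.\ iff $X \perp \mu(\ngo,\ngo)$. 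This yields $\rad(q_\ip) = \zg(\ngo,\mu)$ and $\rad(p_\ip) = \mu(\ngo,\ngo)^\perp$.

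Combining the two steps, for $X \in \zg(\ngo,\mu) \cap \mu(\ngo,\ngo)^\perp = \rad(q_\ip) \cap \rad(p_\ip)$ one obtains $\Ricci_\ip(X,Y) = -q_\ip(X,Y) + p_\ip(X,Y) = 0$ for every $Y \in \ngo$, i.e.\ $X \in \rad\Ricci_\ip$. I do not expect any real obstacle here: the statement is a formal consequence of the decomposition \eqref{eqn_Ricpq}. The only point requiring slight care is the use of semi-definiteness to pass from the vanishing of the diagonal quadratic forms to the vanishing of the full bilinear maps $q_\ip(X,\cdot)$ and $p_\ip(X,\cdot)$ — precisely the content of the parenthetical remark recalled before the statement — which is what legitimises the radical identifications.
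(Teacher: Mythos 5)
Your proposal is correct and follows exactly the paper's own route: the paper derives the lemma immediately from the decomposition \eqref{eqn_Ricpq} together with the stated radical identifications $\rad(q_\ip) = \zg(\ngo,\mu)$ and $\rad(p_\ip) = \mu(\ngo,\ngo)^\perp$. Your only addition is to spell out the verification of those identifications via semi-definiteness, which the paper leaves implicit in its parenthetical remark.
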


The following lemma allows us to compute $\sigma(\Ricci_\ip)$ in terms of $\sigma(p_\ip)$ and $\sigma(q_\ip)$. Recall that the signature $\sigma(b)$ of a symmetric bilinear form $b(\cdot,\cdot)$ on a vector space $V$ is the unique triple $(s^-, s^0, s^+) \in \ZZ_{\geq0}^3$ with  $s^- + s^0 + s^+ = \dim V$, and $s^\pm$ the maximal dimension of a subspace where $\pm b$ is positive definite.

\begin{lemma}\label{lem_signaturest}
Let $s(\cdot,\cdot), t(\cdot,\cdot) \in S^2(W^*)$ be two positive semi-definite symmetric bilinear forms on a finite-dimensional real vector space $W$, with  $ \sigma(s) = (0,b,c)$, $\sigma(t) = (0,c,b)$ and $\rad(s) \cap \rad(t) = 0$. Then, $\sigma(t-s) = (c,0,b)$.  
\end{lemma}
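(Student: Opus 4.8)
The plan is to decompose $W$ as the direct sum of the two radicals and then exploit the fact, recalled above, that a positive semi-definite form is strictly positive off its radical. First I would record the dimensions: from $\sigma(s) = (0,b,c)$ we get $\dim \rad(s) = b$ and $\dim W = b+c$, while $\sigma(t) = (0,c,b)$ gives $\dim \rad(t) = c$. Since $\rad(s) \cap \rad(t) = 0$ and $\dim \rad(s) + \dim \rad(t) = b+c = \dim W$, the two radicals are complementary, so
\[
	W = \rad(s) \oplus \rad(t).
\]

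Next I would analyse $t-s$ on each summand separately. On $\rad(s)$ the form $s$ vanishes identically, hence $t-s = t$ there; and for any nonzero $v \in \rad(s)$ we have $v \notin \rad(t)$, so $t(v,v) > 0$ by the characterisation $\rad(t) = \{w : t(w,w) = 0\}$ valid for the semi-definite form $t$. Thus $t-s$ is positive definite on the $b$-dimensional subspace $\rad(s)$, giving $s^+(t-s) \geq b$. Symmetrically, on $\rad(t)$ one has $t-s = -s$, and any nonzero $v \in \rad(t)$ lies outside $\rad(s)$, so $s(v,v) > 0$ and $(t-s)(v,v) < 0$; hence $t-s$ is negative definite on the $c$-dimensional subspace $\rad(t)$, giving $s^-(t-s) \geq c$.

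Finally, I would close the argument by a dimension count. The three entries of $\sigma(t-s)$ sum to $\dim W = b+c$, and we have shown $s^-(t-s) \geq c$ and $s^+(t-s) \geq b$, so these inequalities are forced to be equalities and $s^0(t-s) = 0$; therefore $\sigma(t-s) = (c,0,b)$, as claimed. The only place requiring genuine care is the passage from semi-definiteness to definiteness on each radical, i.e.~that $t$ does not merely remain nonnegative but becomes strictly positive on all of $\rad(s)$ (and dually for $s$ on $\rad(t)$); this is precisely where the transversality hypothesis $\rad(s) \cap \rad(t) = 0$ is used, and no further compatibility between $s$ and $t$ is needed.
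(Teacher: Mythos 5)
Your proposal is correct and follows essentially the same route as the paper: both use the dimension count $\dim\rad(s)+\dim\rad(t)=b+c=\dim W$ together with $\rad(s)\cap\rad(t)=0$ to obtain $W=\rad(s)\oplus\rad(t)$, and then observe that $t-s$ restricts to $t>0$ on $\rad(s)$ and to $-s<0$ on $\rad(t)$. Your write-up merely makes explicit two steps the paper leaves implicit — the strict positivity off the radical (via $\rad(t)=\{w: t(w,w)=0\}$ for semi-definite forms) and the final count $s^+\geq b$, $s^-\geq c$, $s^-+s^0+s^+=b+c$ forcing equality — both of which are sound.
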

\begin{proof}
Using $\dim \rad(s) + \dim \rad(t) = b + c = \dim W$ and $\rad(s) \cap \rad(t) = 0$ yields $W = \rad(s) \oplus \rad(t)$. This in turn implies that $(t-s)|_{\rad(s) \times \rad(s)} = t|_{\rad(s) \times \rad(s)} > 0$, $(t-s)|_{\rad(t) \times \rad(t)} = - s|_{\rad(t) \times \rad(t)} < 0$, and the lemma follows.
\end{proof}

\begin{lemma}\label{lem_easydirection}
Let $\ip$ be a scalar product on a nilpotent Lie algebra $(\ngo,\mu)$, and set 
\[
	r_{\mu} := \dim \zg(\ngo,\mu) - \dim \big( \zg(\ngo,\mu) \cap \mu(\ngo,\ngo)^\perp  \big) - \dim \big( \zg(\ngo,\mu) \cap \mu(\ngo,\ngo) \big).
\]
Then, $(s^-, s^0, s^+) := \sigma(\Ricci_\ip)$ satisfies 
\[
	s^- \geq u+r_\mu, \qquad s^0 \geq a-r_\mu, \qquad s^+ \geq z + r_\mu.
\]
In particular, the inclusion $\subseteq$ in Theorem \ref{thm_main} holds.
\end{lemma}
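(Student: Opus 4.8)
The plan is to exploit the decomposition $\Ricci_\ip = p_\ip - q_\ip$ from \eqref{eqn_Ricpq}, together with the radicals $\rad(q_\ip) = \zg(\ngo,\mu)$ and $\rad(p_\ip) = \mu(\ngo,\ngo)^\perp$ recorded after \eqref{eqn_pq}. Abbreviating $\zg := \zg(\ngo,\mu)$, $\dg := \mu(\ngo,\ngo)$ and writing $u,a,z,m$ for $u_\ngo,a_\ngo,z_\ngo,m_\ngo$, I would first rewrite the definition of $r_\mu$ as the single dimension count $\dim(\zg\cap\dg^\perp) = a - r_\mu$, using $\dim\zg = a+z$ and $\dim(\zg\cap\dg) = z$. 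The whole argument revolves around the subspace $K := \zg\cap\dg^\perp = \rad(p_\ip)\cap\rad(q_\ip)$, of dimension $a - r_\mu$; I would also note $\dim\dg^\perp = u+a$, from $\dim\ngo = u+m+a+z$.

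Each of the three bounds then follows from one restriction, read off from $K$ and the semi-definiteness of $p_\ip,q_\ip$. For $s^+$: on the center $\zg$ the form $q_\ip$ vanishes (as $\zg = \rad(q_\ip)$), so $\Ricci_\ip|_\zg = p_\ip|_\zg \geq 0$; being positive semi-definite, its radical inside $\zg$ is $\{v\in\zg : p_\ip(v,v)=0\} = \zg\cap\dg^\perp = K$, hence $\Ricci_\ip$ is positive definite on a complement of $K$ in $\zg$, of dimension $\dim\zg - \dim K = z + r_\mu$; thus $s^+ \geq z+r_\mu$. Symmetrically, on $\dg^\perp$ the form $p_\ip$ vanishes, so $\Ricci_\ip|_{\dg^\perp} = -q_\ip|_{\dg^\perp} \leq 0$ with radical $\dg^\perp\cap\zg = K$, giving a negative definite subspace of dimension $\dim\dg^\perp - \dim K = u+r_\mu$, so $s^- \geq u+r_\mu$. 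Finally, since $K \subseteq \rad(p_\ip)\cap\rad(q_\ip) \subseteq \rad(\Ricci_\ip)$, we get $s^0 \geq \dim K = a - r_\mu$.

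To deduce the inclusion $\subseteq$ in Theorem \ref{thm_main}, I would verify $0 \leq r_\mu \leq \min(a,m)$, so that $\sigma(\Ricci_\ip)$ lands in the $r = r_\mu$ term of the union. Non-negativity holds because $\zg\cap\dg^\perp$ and $\zg\cap\dg$ meet trivially inside $\zg$, whence $\dim(\zg\cap\dg^\perp) \leq a$; the bound $r_\mu \leq a$ is immediate; and $r_\mu \leq m$ follows since the orthogonal projection $\zg\to\dg$ has kernel $K$, so its image has dimension $z+r_\mu \leq \dim\dg = z+m$. As $s^- + s^0 + s^+ = \dim\ngo$ automatically, the three lower bounds are consistent and pin $\sigma(\Ricci_\ip)$ down to the claimed set.

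I do not expect a real obstacle here, this being the easy inclusion; the only point requiring care is that one cannot simply invoke Lemma \ref{lem_signaturest}. Indeed, in the general (non-balanced) situation the radicals of $p_\ip$ and $q_\ip$ have nonzero intersection $K$ and their dimensions are not complementary, so the hypotheses $\sigma(s)=(0,b,c)$, $\sigma(t)=(0,c,b)$, $\rad(s)\cap\rad(t)=0$ fail; the bounds must instead be extracted from the three separate witness subspaces built from $K$. The remaining work is the bookkeeping ensuring $r_\mu \in [0,\min(a,m)]$.
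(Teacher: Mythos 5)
Your proof is correct, and for the key bound $s^+ \geq z + r_\mu$ it takes a genuinely different, more elementary route than the paper. The paper handles $s^0$ and $s^-$ exactly as you do: $s^0 \geq \dim\bg$ with $\bg := \zg(\ngo,\mu)\cap\mu(\ngo,\ngo)^\perp$ (your $K$) via Lemma \ref{lem_kerRic}, and negative definiteness of $\Ricci_\ip$ on the orthogonal complement $\cg$ of $\bg$ inside $\mu(\ngo,\ngo)^\perp$, which is your ``complement of $K$ in $\dg^\perp$.'' But for $s^+$ the paper does not restrict to the center: it produces, by a dimension count exploiting the injectivity of $\ad$ on $\cg$ and on $\mg$, two $r_\mu$-dimensional subspaces $W_1\subset\cg$, $W_2\subset\mg$ with $\ad W_1 = \ad W_2$, and then applies Lemma \ref{lem_signaturest} to the restrictions $s=q_\ip|_{W\times W}$, $t=p_\ip|_{W\times W}$ on $W := W_1\oplus W_2\oplus\zg_1$, where the hypotheses (complementary defect dimensions, trivially intersecting radicals) do hold, obtaining signature $(r_\mu,0,z+r_\mu)$ on $W$. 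Your observation that $q_\ip$ vanishes identically on $\zg(\ngo,\mu)$, so that $\Ricci_\ip|_{\zg\times\zg}=p_\ip|_{\zg\times\zg}$ is positive semi-definite with radical exactly $K=\zg\cap\dg^\perp$ (using that for semi-definite forms the radical is the null cone), yields the same bound $s^+\geq \dim\zg-\dim K = z+r_\mu$ in one line, bypassing both the $\ad W_1=\ad W_2$ construction and Lemma \ref{lem_signaturest} entirely; what the paper's longer argument buys is only the extra information of a single subspace realising the full signature $(r_\mu,0,z+r_\mu)$, which is not needed for the stated inequalities. One small correction to your closing remark: Lemma \ref{lem_signaturest} is not unusable here --- the paper does invoke it, just on the restriction to $W$ rather than globally on $\ngo$, where, as you rightly note, its hypotheses fail whenever $K\neq 0$. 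Finally, your explicit verification that $0\leq r_\mu\leq\min(a,m)$ (with $r_\mu\leq m$ coming from the projection $\zg\to\dg$ having kernel $K$) is needed for the ``in particular'' clause and is left implicit in the paper, where it falls out of the computation $r_\mu = \dim(\ad\cg\cap\ad\mg)\leq\dim\ad\mg=m$; spelling it out is a mild improvement in completeness.
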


\begin{proof}
Set $\bg := \zg(\ngo,\mu) \cap \mu(\ngo,\ngo)^\perp$, $\zg_1 := \zg(\ngo,\mu) \cap \mu(\ngo,\ngo)$, $r_\mu := a - \dim \bg \geq 0$. Consider an orthogonal decomposition 
\begin{equation}\label{eqn_dec_n}
	\ngo =  \overbrace{\bg\oplus \cg}^{\mu(\ngo,\ngo)^\perp} \oplus \overbrace{\mg \oplus \zg_1}^{\mu(\ngo,\ngo)}.
\end{equation}

By Lemma \ref{lem_kerRic}, $s^0 = \dim \rad \Ricci_\ip \geq \dim \bg = a-r_\mu$.
Regarding  $s^-$, we observe that $\Ricci_\ip|_{\cg \times \cg}$ is negative definite. Indeed, $\cg \subset \rad(p_\ip)$ and $\cg \cap \rad(q_\ip) = 0$, thus $\Ricci_\ip(X,X) = -q_\ip(X,X) < 0$ for $X\in \cg\backslash \{0\}$.
Hence, 
\begin{align*}
		s^- \geq \dim \cg  &= \,\,  \dim \ngo - \dim \mu(\ngo,\ngo) - \dim \bg  \\
			&= \dim \ngo - \dim(\mu(\ngo,\ngo) + \zg(\ngo,\mu)) + \dim(\zg(\ngo,\mu)) - \dim(\zg \cap \mu(\ngo,\ngo))  - \dim \bg  \\
			& = u + a - (a-r_\mu) = u + r_\mu.
\end{align*}

It remains to be shown that $s^+ \geq z + r_\mu$. To that end, we will show that there exist two $r_\mu-$dimensional subspaces $W_1 \subset \mu(\ngo,\ngo)^\perp$ and $W_2\subset \mg$ such that the restriction of $\Ricci_\ip$ to $W_1\oplus W_2 \oplus \zg_1$ has signature $(r_\mu,0,z+r_\mu)$. 

We have the decomposition \eqref{eqn_dec_n} with $\bg \oplus \zg_1 \subset \zg(\ngo,\mu)$. Thus, $\ad(\cg + \mg) = \ad \ngo \simeq \ngo/\zg(\ngo,\mu)$. On the other hand, $\ad|_\cg$ and $\ad|_\mg$  are both injective, since $(\cg \oplus \mg) \cap \zg(\ngo,\mu) = 0$. Therefore,
\begin{align*}
	\dim  \left( \ad \cg  \cap \ad \mg\right) =& \,\,  \dim \ad \cg + \dim \ad \mg  -  \dim \ad(\cg + \mg)\\
		=&  \,\,  (u+r_\mu) + m - \dim \ngo/\zg(\ngo,\mu) \\
		=&  \,\, u+r_\mu + m - n + z + a = r_\mu.
\end{align*}
This means that there are $r_\mu$-dimensional subspaces $W_1 \subset \cg$, $W_2 \subset \mg$  such that $\ad W_1 = \ad W_2$.

Set now $W:= W_1\oplus W_2 \oplus \zg_1$, $s := q_\ip|_{W\times W}$, $t:= p_\ip|_{W\times W}$. We first observe that  $\rad(s) = \zg(\ngo,\mu) \cap W$ and $\rad(t) = \mu(\ngo,\ngo)^\perp \cap W$, thus $\rad(s) \cap \rad(t)  = \bg \cap W = 0$. Also, since $W_1\subset \mu(\ngo,\ngo)^\perp$ and $W_2\oplus \zg_1 \subset \mu(\ngo,\ngo)$, we have that  $\sigma(t) = (0,r_\mu,z+r_\mu)$. On the other hand,
\begin{align*}
	\dim \rad (s) &= \,\, \dim W \cap \zg(\ngo,\mu)  = \dim \ker (\ad |_W) = \dim W - \dim \ad(W) \\
		 &= \,\, 2r_\mu+z - r_\mu = z+r_\mu,
\end{align*}
since $\ad W = \ad W_1$ is $r_\mu$-dimensional.
Thus,  $\sigma(s) = (0,z+r_\mu, r_\mu)$. We may now apply Lemma \ref{lem_signaturest} to conclude that the signature of $\Ricci_\ip |_{W\times W} = t - s$ is $(r_\mu,0, z+r_\mu)$, as desired.
\end{proof}


\section{A counterexample}\label{sec_counter}

It has recently been conjectured that the set $\sigma \Ricci(\ngo)$ can be described as follows:

\begin{conjecture}\cite{DBW16}\label{conj_old}
The set of all possible signatures for the Ricci curvature of left-invariant Riemannian metrics on a nilpotent Lie group with Lie algebra $\ngo$ can be described in terms of the constants given in Theorem \ref{thm_main} as
\begin{align*}
	\RS(\ngo) \, &=  \, \bigcup_{r=0}^{\min(a,m)}  \left\{ (s^-,s^0,s^+) \, :  \,  
	 s^-  \geq u{+}r ,  \,\, s^0 \geq a{-}r, \,\, s^+ \geq z , \, \,  s^- {+} s^0 {+} s^+ = \dim \ngo \right\}. 
\end{align*}
\end{conjecture}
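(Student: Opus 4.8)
The plan is not to prove Conjecture \ref{conj_old} but to \emph{refute} it, since it stands in direct conflict with Theorem \ref{thm_main}. Comparing the two descriptions of $\RS(\ngo)$, the only discrepancy is in the lower bound on the positive part of the signature: the conjecture demands merely $s^+ \geq z_\ngo$, whereas Theorem \ref{thm_main} requires the stronger $s^+ \geq z_\ngo + r$ in the $r$-th piece of the union. Thus the conjectural set is strictly larger than the true one precisely when the shift $r \geq 1$ is available, i.e.\ when $\min(a_\ngo, m_\ngo) \geq 1$. My first step is therefore to isolate a signature lying in the conjectural set but violating the constraints of Theorem \ref{thm_main}: taking $r = 1$ and placing the excess $m_\ngo$ in the negative part, the signature $(u_\ngo + m_\ngo + 1,\, a_\ngo - 1,\, z_\ngo)$ is predicted by the conjecture, yet it has $s^+ = z_\ngo < z_\ngo + r'$ for every admissible $r' \geq 1$ and $s^0 = a_\ngo - 1 < a_\ngo$ for $r' = 0$, so no admissible $r'$ works and it cannot occur according to Theorem \ref{thm_main}.

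Next I would construct the smallest explicit $\ngo$ realising this gap. Since a counterexample needs $a_\ngo \geq 1$ and $m_\ngo \geq 1$, and since $m_\ngo \geq 1$ forces a non-central bracket image while $a_\ngo \geq 1$ forces a central direction outside $[\ngo,\ngo]$, one checks that neither of the two non-abelian nilpotent Lie algebras of dimension $\leq 4$ qualifies, so dimension $5$ is the minimum. A natural candidate is $\ngo = \RR e_0 \oplus \ngo_1$, where $\ngo_1$ is the $4$-dimensional filiform algebra with $[e_1,e_2] = e_3$, $[e_1,e_3] = e_4$, and $\RR e_0$ is an abelian direct summand. Here $\zg(\ngo) = \langle e_0, e_4\rangle$, $[\ngo,\ngo] = \langle e_3, e_4\rangle$ and $\zg(\ngo) \cap [\ngo,\ngo] = \langle e_4\rangle$, giving $u_\ngo = 2$, $a_\ngo = 1$, $z_\ngo = 1$, $m_\ngo = 1$, so $\min(a_\ngo, m_\ngo) = 1$ and the union runs over $r \in \{0,1\}$. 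The separating signature above specialises to $(4,0,1)$, which Conjecture \ref{conj_old} (with $r=1$: $s^- = 4 \geq u_\ngo + 1$, $s^0 = 0 \geq a_\ngo - 1$, $s^+ = 1 \geq z_\ngo$) asserts to be attainable.

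The crux is then to show directly that $(4,0,1) \notin \RS(\ngo)$, and here I would invoke Lemma \ref{lem_easydirection}, which is already available and self-contained (so the argument does not rely circularly on the as-yet-unproven Theorem \ref{thm_main}). For any scalar product $\ip$ the associated integer $r_\mu$ satisfies $0 \leq r_\mu \leq a_\ngo = 1$, and the lemma yields $s^0 \geq a_\ngo - r_\mu$ and $s^+ \geq z_\ngo + r_\mu$. If $r_\mu = 0$ then $s^0 \geq 1$, while if $r_\mu = 1$ then $s^+ \geq 2$; in either case the signature $(4,0,1)$ is excluded. Hence $(4,0,1) \in \RS(\ngo)$ would contradict Lemma \ref{lem_easydirection}, disproving the conjecture. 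The main obstacle in this program is not the final exclusion, which Lemma \ref{lem_easydirection} dispatches cleanly, but rather the two bookkeeping steps that precede it: correctly locating the signature that separates the conjectural set from the true one, and verifying that the candidate algebra is genuinely minimal and carries the claimed invariants $u_\ngo, a_\ngo, z_\ngo, m_\ngo$.
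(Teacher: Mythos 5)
Your proposal is correct and takes essentially the same route as the paper: Section \ref{sec_counter} refutes the conjecture with the very same algebra (the $4$-dimensional filiform algebra $[e_1,e_2]=e_3$, $[e_1,e_3]=e_4$ plus an abelian line, denoted $L_{5,3}$), the same invariants $u=2$, $a=z=m=1$, the same separating signature $(4,0,1)$ obtained from the conjecture with $r=1$, and the same non-circular appeal to Lemma \ref{lem_easydirection} — the paper simply adds the two bounds to get $s^0+s^+\geq a+z=2$, which is equivalent to your case split on $r_\mu\in\{0,1\}$. The only (inessential) slip is your minimality aside: there are three, not two, non-abelian nilpotent Lie algebras of dimension $\leq 4$ (namely $\hg_3$, $\hg_3\oplus\RR$, and the filiform one), though indeed none of them has $\min(a_\ngo,m_\ngo)\geq 1$, and minimality plays no role in the refutation.
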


It is not hard to see that this conjectural set contains the one stated in Theorem \ref{thm_main}, and that the inclusion is strict unless $\zg(\ngo,\mu) \subset \mu(\ngo,\ngo)$ (if the latter happens then $a=0$, thus also $r=0$). Using Lemma \ref{lem_easydirection} we can quickly state an explicit counterexample:

\begin{example}
Consider the $5$-dimensional nilpotent Lie algebra $\ngo$ with basis $\{X_i\}_{i=1}^5$ and non-zero Lie brackets given by
\[
	\mu(X_1, X_2)= X_3, \qquad \mu(X_1,X_3) = X_4. 
\]
(This Lie algebra is denoted by $L_{5,3}$ in  \cite{DBW16}.) It satisfies $n=5$, $z = a = m = 1$, $u=2$. Thus, according to Conjecture \ref{conj_old} we should have $(4,0,1)\in \RS(\ngo)$ (setting $r=1$). However, from Lemma \ref{lem_easydirection} it follows that  $(s^-, s^0, s^+) \in \RS(\ngo)$ implies $s^0 + s^+ \geq a + z = 2$, a contradiction.
\end{example}

\section{Subgroups of $\Gl(\ngo)$ whose orbits are closed}\label{sec_closedorbits}

In this section we will produce closed subgroups of $\Gl(\ngo)$ whose orbits through $\mu \in \Lambda^2(\ngo^*)\otimes\ngo$ are closed. Let us first set up some notation. Given a  Lie algebra $\ngo$, a fixed `background' scalar product $\ip$ induces scalar products (also denoted by $\ip$) on $\ngo^*$ and on any tensor product. For example, given any orthonormal basis $\{ X_i\}$ of $(\ngo,\ip)$ with dual basis $\{X^i \}$, the induced scalar products  on $\End(\ngo) \simeq \ngo^* \otimes \ngo$  and $\Lambda^2(\ngo^*) \otimes \ngo$ have orthonormal bases given by $\{ X^i \otimes X_j \}$ and $\{ (X^i \wedge X^j) \otimes X_k \}$, respectively. Of course, the one on $\End(\ngo)$ may be alternatively defined by $\la A , B \ra := \tr A B^t$, $A,B\in \End(\ngo)$, where the transpose is defined with respect to $\ip$.

 Let $\left(\ngo^{(i)} \right)_{i\geq 0}$ denote the descending central series of a Lie algebra $(\ngo,\mu)$: 
\[
	\ngo^{(0)} = \ngo, \qquad \ngo^{(i+1)} := \mu\big(\ngo, \ngo^{(i)}\big) \quad \hbox{ for }i\geq 0.
\]
By definition, $(\ngo,\mu)$ is nilpotent if and only if $\ngo^{(N)} = 0$ for some $N \in \NN$. We will assume this is the case from now on.

Given any direct sum decomposition $\ngo = \vg_1 \oplus \vg_2  \oplus \vg_3$ into subspaces $(\vg_i)_{i=1}^3$, consider the following subset of $\Gl(\ngo)$:
\begin{equation}\label{eqn_Gv}
	\Gv :=  \left\{ h\in  \Gl(\ngo)  : h \big|_{\vg_1} = \Id_{\vg_1}, \, h \big|_{\vg_2} = \Id_{\vg_2}, \, h(\vg_3) \subset \vg_2 \oplus \vg_3  \right\}.
\end{equation}
It is clear that $\Gv$ is a closed Lie subgroup of $\Gl(\ngo)$, with Lie algebra
\[
	\ggov = \left\{  A\in \End(\ngo) : A \big|_{\vg_1} =  A\big|_{\vg_2} = 0, \, \, A (\vg_3) \subset \vg_2 \oplus \vg_3 \right\}.
\]

\begin{remark}
The reader interested in understanding the proof of Theorem \ref{thm_main} in the case $\zg(\ngo,\mu) \subset \mu(\ngo,\ngo)$ may assume that $\vg_1 = 0$ throughout this and the following sections.
\end{remark}

The next lemma is one of the main ingredients in the proof of Theorem \ref{thm_main}:
\begin{lemma}\label{lem_closedorb}
Let $\ngo = \vg_1 \oplus \vg_2  \oplus \vg_3$ be a nilpotent Lie algebra with Lie bracket $\mu \in \Lambda^2 \ngo^* \otimes \ngo$. Assume that 
$(\vg_1 \oplus \vg_2) + \mu(\ngo,\ngo) = \ngo$ and $\zg(\ngo,\mu) \subset \vg_1 \oplus \vg_2$. Then, the orbit $\Gv\cdot \mu $ is closed.
\end{lemma}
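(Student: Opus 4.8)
The plan is to prove closedness by showing that the orbit map is \emph{proper modulo the stabiliser}: any sequence $h_k\in\Gv$ for which $\{h_k\cdot\mu\}$ is bounded should admit a subsequence converging in $\Gv\cdot\mu$; since $\Gv\cdot\mu$ is then the image of a proper map into a locally compact space, it is closed. To organise the argument I would use the decomposition $\Gv=\U\rtimes\Lsf$, where $\U\cong\operatorname{Hom}(\vg_3,\vg_2)$ is the abelian unipotent radical (those $h$ with $(h-\Id)\vg_3\subseteq\vg_2$) and $\Lsf\cong\Gl(\vg_3)$ is the reductive Levi factor acting on $\vg_3$ and fixing $\vg_1\oplus\vg_2$. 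Writing $h_k=u_kl_k$ with $u_k\in\U$, $l_k\in\Lsf$, I would first bound the reductive part $l_k$, and then the unipotent part $u_k$.

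For the reductive part I would use the Cartan decomposition $l_k=\kappa_k\exp(X_k)$ with $\kappa_k\in\Or(\vg_3)$ and $X_k$ symmetric, and (replacing $l_k$ by a stabiliser translate) assume $X_k$ orthogonal to the Lie algebra of $(\Gv)_\mu$. As $\{\kappa_k\}$ is relatively compact, its contribution to $\|h_k\cdot\mu\|$ is bounded above and below, so boundedness of $\{h_k\cdot\mu\}$ forces $\{\exp(X_k)\cdot\mu\}$ bounded. If $\|X_k\|\to\infty$, pass to a subsequence with $Y_k:=X_k/\|X_k\|\to Y$, a unit symmetric endomorphism of $\vg_3$ (extended by $0$ on $\vg_1\oplus\vg_2$); then $\exp(sY)$ is a genuine diagonalisable one-parameter subgroup. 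Diagonalising $Y$ in a basis $\{f_i\}$ of $\vg_3$ with eigenvalues $r_i$ and recording structure constants $\mu_{ab}^c$ in a basis of $\ngo$ refining $\{f_i\}$ together with a weight-$0$ basis of $\vg_1\oplus\vg_2$, one gets
\[
	\|\exp(sY)\cdot\mu\|^2=\sum_{a,b,c}(\mu_{ab}^c)^2\,e^{2s(w_c-w_a-w_b)},
\]
where $w_\bullet$ is the $Y$-weight. This is bounded as $s\to+\infty$ only if every nonzero $\mu_{ab}^c$ has $w_c-w_a-w_b\le 0$. The crux is to contradict this using the hypotheses: $(\vg_1\oplus\vg_2)+\mu(\ngo,\ngo)=\ngo$ forces each $f_i$ to appear with nonzero coefficient as a $\vg_3$-output of some bracket, while $\zg(\ngo,\mu)\subseteq\vg_1\oplus\vg_2$ forces each $f_i$ to appear as a nonzero input; nilpotency, organised through the descending central series $\ngo^{(i)}$ (choosing the basis so that $\ad_\mu$ strictly raises the filtration level), rules out the weight-preserving `diagonal' brackets. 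Tracking the resulting sign constraints on the $r_i$ should force $Y=0$, contradicting $\|Y\|=1$. Hence $\{X_k\}$ is bounded and, along a subsequence, $l_k\to l_\infty\in\Lsf$.

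It remains to bound $u_k$. Put $\mu_k:=l_k\cdot\mu\to l_\infty\cdot\mu=:\mu'$, so that $u_k\cdot\mu_k\to\nu$ with $\{\mu_k\}$ in a compact set. For the abelian unipotent group $\U$ the induced action $u\mapsto u\cdot\eta$ is polynomial of bounded degree, and $\exp(sN)\cdot\eta$ is bounded in $s$ precisely when $N$ stabilises $\eta$; hence the orbit map $\U/\U_\eta\to\Lambda^2\ngo^*\otimes\ngo$ is proper, uniformly for $\eta$ near $\mu'$. This bounds $\{u_k\}$ modulo $\U_{\mu'}$, so a further subsequence gives $u_k\to u_\infty$ and $\nu=u_\infty l_\infty\cdot\mu\in\Gv\cdot\mu$, proving the orbit closed. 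I expect the main obstacle to be the reductive step: converting the two hypotheses into a clean statement about the weights $r_i$ and using nilpotency to exclude weight-zero `loops', so as to genuinely force $Y=0$ for every admissible diagonalisable direction. The uniform properness of the unipotent orbit map in the final step—allowing the base point $\mu_k$ to vary with possibly jumping stabiliser—is a secondary technical point.
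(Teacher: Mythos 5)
The overall strategy has two genuine gaps, one in each half. Your Levi decomposition $\Gv\cong\operatorname{Hom}(\vg_3,\vg_2)\rtimes\Gl(\vg_3)$ is correct, but the reductive step already breaks at its first inference: writing $h_k=u_kl_k$, boundedness of $\{h_k\cdot\mu\}$ does \emph{not} force boundedness of $\{l_k\cdot\mu\}$ (equivalently of $\{\exp(X_k)\cdot\mu\}$), because the unipotent factor $u_k$ is not norm-controlled and can cancel the growth coming from $\exp(X_k)$; to decouple the factors you would need a $\U$-invariant projection or norm, and none exists in your sketch ($\vg_2$ is not an ideal, so there is no induced action on a quotient killing $\U$). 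Even granting this, the passage from ``$\exp(s_kY_k)\cdot\mu$ bounded, $Y_k\to Y$, $s_k\to\infty$'' to ``all $Y$-weights on nonzero structure constants are $\leq 0$'' is illegitimate as stated: $s_k(Y_k-Y)$ need not stay bounded, so boundedness does not pass to the limiting one-parameter subgroup $\exp(sY)$. This is precisely the hard direction of the real Hilbert--Mumford/Birkes-type theorems, which require substantial machinery (Richardson--Slodowy et al.) rather than a limit of Cartan decompositions --- and on top of that the concluding sign analysis forcing $Y=0$ is left unproven, as you acknowledge.

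The unipotent step, which you call a secondary technical point, is in fact false in the generality you invoke: uniform properness of unipotent orbit maps with a moving base point fails. Toy example: $\U=\RR$ acting on $\RR^2$ by $s\cdot(x,y)=(x,\,y+sx)$; taking $\eta_k=(1/k,0)\to\eta=(0,0)$ and $s_k=k$ gives $s_k\cdot\eta_k\to(0,1)\notin\U\cdot\eta$, with $s_k\to\infty$ and trivial stabilisers along the sequence, even though each individual orbit is closed (Kostant--Rosenlicht). So ``$u_k$ bounded modulo $\U_{\mu'}$'' does not follow; the lemma's hypotheses would have to enter at this point, and in your sketch they do not. For contrast, the paper's proof avoids the reductive/unipotent splitting entirely and is elementary: given $h^{(k)}\cdot\mu\to\bar\mu$, the set $\hg$ of vectors $v$ with $(h^{(k)}v)_k$ bounded is a subalgebra, by the identity $h^{(k)}\mu(v,w)=(h^{(k)}\cdot\mu)(h^{(k)}v,h^{(k)}w)$ together with boundedness of the brackets; it contains $\vg_1\oplus\vg_2$, so the hypothesis $(\vg_1\oplus\vg_2)+\mu(\ngo,\ngo)=\ngo$ and Lemma \ref{lem_subalgebra} (a nilpotency argument via the descending central series) force $\hg=\ngo$, whence $h^{(k)}$ subconverges to some $\bar h$; then $\ig:=\{v:h^{(k)}v\to0\}$ is an ideal meeting $\zg(\ngo,\mu)\subset\vg_1\oplus\vg_2$ trivially, hence zero because every nonzero ideal of a nilpotent Lie algebra meets the centre, so $\bar h$ is invertible, lies in the closed group $\Gv$, and $\bar\mu=\bar h\cdot\mu$. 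Your two hypotheses are used exactly where the toy example shows they must be, with no Cartan decomposition or weight analysis needed.
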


Its proof requires the following fact about nilpotent Lie algebras.

\begin{lemma}\label{lem_subalgebra}
Let $\hg$ be a subalgebra of a nilpotent Lie algebra $\ngo$ with $\ngo = \hg + \mu(\ngo,\ngo)$. Then, $\hg = \ngo$.
\end{lemma}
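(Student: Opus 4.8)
The plan is to argue by induction on the nilpotency length $N$ of $\ngo$, that is, the smallest integer with $\ngo^{(N)} = 0$. The essential difficulty is that $\hg$ is only assumed to be a \emph{subalgebra}, not an ideal, so it need not interact well with the descending central series $\ngo^{(i)}$; in particular one cannot simply reduce $\mu(\ngo,\ngo)$ modulo $\hg$ term by term. The device to circumvent this is to peel off the bottom term $\ngo^{(N-1)}$ of the series, which is \emph{central}, and to treat it separately from the rest.

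For the base case $N \leq 1$ the algebra $\ngo$ is abelian, so $\mu(\ngo,\ngo)=\ngo^{(1)}=0$ and the hypothesis $\ngo = \hg + \mu(\ngo,\ngo)$ reads $\hg = \ngo$. For the inductive step I first note that $\ngo^{(N-1)}$ is a central ideal, since $\mu(\ngo,\ngo^{(N-1)}) = \ngo^{(N)} = 0$. Passing to the quotient $\bar\ngo := \ngo/\ngo^{(N-1)}$ via the projection $\pi$, the algebra $\bar\ngo$ is nilpotent with $\bar\ngo^{(N-1)} = \pi\big(\ngo^{(N-1)}\big) = 0$, hence of strictly smaller length. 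Applying $\pi$ (a surjective homomorphism) to $\ngo = \hg + \mu(\ngo,\ngo)$ yields $\bar\ngo = \pi(\hg) + \mu(\bar\ngo,\bar\ngo)$, where $\pi(\hg)$ is again a subalgebra. The inductive hypothesis then gives $\pi(\hg) = \bar\ngo$, which lifts to $\ngo = \hg + \ngo^{(N-1)}$.

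It remains to show $\ngo^{(N-1)} \subseteq \hg$, and this is the step where the centrality of $\ngo^{(N-1)}$ is used twice. Writing $\ngo^{(N-1)} = \mu(\ngo,\ngo^{(N-2)})$ and substituting $\ngo = \hg + \ngo^{(N-1)}$ in the first argument, the contribution $\mu(\ngo^{(N-1)},\ngo^{(N-2)})$ vanishes by centrality, leaving $\ngo^{(N-1)} = \mu(\hg,\ngo^{(N-2)})$. Since $\ngo^{(N-2)} \subseteq \ngo = \hg + \ngo^{(N-1)}$ and $\hg$ is a subalgebra, I obtain $\mu(\hg,\ngo^{(N-2)}) \subseteq \mu(\hg,\hg) + \mu(\hg,\ngo^{(N-1)}) \subseteq \hg + \ngo^{(N)} = \hg$. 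Therefore $\ngo = \hg + \ngo^{(N-1)} = \hg$, completing the induction.

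I expect the only genuinely delicate point to be this final paragraph: everything up to $\ngo = \hg + \ngo^{(N-1)}$ is formal manipulation of the quotient, but the removal of the central bottom layer relies crucially on the two cancellations $\mu(\ngo^{(N-1)},\ngo^{(N-2)}) = 0$ and $\mu(\hg,\ngo^{(N-1)}) \subseteq \ngo^{(N)} = 0$, both furnished by the fact that $\ngo^{(N-1)}$ lies in $\zg(\ngo,\mu)$. No further structural input about $\hg$ beyond being a subalgebra is needed.
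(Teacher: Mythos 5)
Your proof is correct, but it is organised differently from the paper's. The paper runs a single forward induction on the depth $r$, proving $\ngo = \hg + \ngo^{(r)}$ for all $r \geq 1$: assuming this for some $r$ and substituting into the hypothesis gives $\ngo = \hg + \mu\big(\hg + \ngo^{(r)}, \hg + \ngo^{(r)}\big) \subseteq \hg + \ngo^{(r+1)}$, using only $\mu(\hg,\hg)\subseteq\hg$ together with the containments $\mu(\hg,\ngo^{(r)}) \subseteq \ngo^{(r+1)}$ and $\mu(\ngo^{(r)},\ngo^{(r)}) \subseteq \ngo^{(r+1)}$; nilpotency then finishes the job at $r = N$. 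You instead induct on the nilpotency length, pass to the central quotient $\ngo/\ngo^{(N-1)}$, and absorb the bottom layer into $\hg$ at the end. Both arguments are sound, and they are close relatives: your intermediate conclusion $\ngo = \hg + \ngo^{(N-1)}$ is precisely the paper's statement at $r = N-1$, and your final absorption step is the last turn of the paper's induction in disguise (from $\ngo = \hg + \ngo^{(N-1)}$ one more substitution gives $\ngo \subseteq \hg + \ngo^{(N)} = \hg$). One remark on your closing assessment: the centrality of $\ngo^{(N-1)}$ is not actually essential, since the paper's argument never invokes it. Your two cancellations $\mu(\ngo^{(N-1)},\ngo^{(N-2)}) = 0$ and $\mu(\hg,\ngo^{(N-1)}) = 0$ can be weakened to containments one level deeper in the central series, which is all that is needed; vanishing only matters at the very last step, where it is automatic because $\ngo^{(N)} = 0$. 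What your version buys is a standard quotient-by-the-centre induction that transfers to settings where arguing in quotients is preferable; what it costs is the bookkeeping of identifying the lower central series of the quotient ($\bar\ngo^{(i)} = \pi(\ngo^{(i)})$), which the paper's two-line computation inside the fixed algebra avoids entirely.
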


\begin{proof}
By nilpotency it is enough to prove that $\ngo = \hg + \ngo^{(r)}$ for all $r\geq 1$. We establish this by induction, the case $r=1$ being the lemma assumption. If  $\ngo = \hg + \ngo^{(r)}$, then
\begin{align*}
	\ngo &= \,\, \hg + \mu(\ngo,\ngo) = \hg + \mu(\hg +  \ngo^{(r)}, \hg +  \ngo^{(r)}) \\
		&  \subset \,\,  \hg + \mu(\hg,\hg) + \mu(\hg,  \ngo^{(r)}) +  \mu(\ngo^{(r)},  \ngo^{(r)})
		\subset \hg + \ngo^{(r+1)},
\end{align*}
and the claim follows.
\end{proof}


\begin{proof}[Proof of Lemma \ref{lem_closedorb}]
Consider a sequence $(h^{(k)})_{k\geq 1} \subset \Gv$ such that   $\lim_{k\to \infty} h^{(k)} \cdot \mu =: \bar \mu$ exists. We first claim that for each $v\in \ngo$, the sequence $(h^{(k)}v)_{k\in \NN}$ is bounded. To see that, set
\[
	\hg := \left\{ v\in \ngo : (h^{(k)} v)_{k\in \NN} \hbox{ is bounded} \right\} \subset \ngo.
\]
It is clearly a vector subspace of $\ngo$. Moreover, given $v,w \in \hg$, we have 
\begin{equation}\label{eqn_hkmu}
	h^{(k)} \left(\mu(v,w) \right) = (h^{(k)} \cdot \mu) \left(h^{(k)} v, h^{(k)} w\right), 
\end{equation}
which is bounded uniformly in $k$ since $\big(h^{(k)} \cdot \mu\big)_{k\in \NN}$ is bounded in $\Lambda^2 (\ngo^*) \otimes \ngo$. Thus, $\hg$ is a Lie subalgebra of $(\ngo,\mu)$. Also, $\vg_1 \oplus \vg_2 \subset \hg$ by definition of $\Gv$, thus by assumption we must have that $\hg + \mu(\ngo,\ngo) = \ngo$. Lemma \ref{lem_subalgebra} now yields $\hg = \ngo$.

The above claim implies that,  after passing to a subsequence (which by simplicity we denote with the same indices), $h^{(k)}$ converges to some linear map $\bar h$. It is enough to show that $\bar h$ is invertible. Indeed, this would imply that $\bar \mu = \lim h^{(k)} \cdot \mu =  \bar h \cdot \mu$, as desired. 

Assume on the contrary this is not the case and let 
\[
	\ig := \{ v\in \ngo  : h^{(k)} v \to_{k\to\infty} 0\} \neq 0.
\]
By \eqref{eqn_hkmu}, $\ig$ is an ideal in $(\ngo,\mu)$, and $\ig \cap \zg(\ngo,\mu) = 0$ since by assumption $\zg(\ngo,\mu) \subset \vg_1 \oplus \vg_2$. This contradicts the fact that any nonzero ideal of a nilpotent Lie algebra must intersect its center \cite[p.13]{Hum78}.
\end{proof}

\section{Closed orbits and zeroes of the Ricci curvature}\label{sec_Riczeroes}

We now recall one of the most remarkable and useful facts about the Ricci curvature of nilmanifolds:  formula \eqref{eqn_Ricmu} below, relating it to the 'GIT moment map' of the $\Gl(\ngo)$-representation \eqref{eqn_action}. Its origins may be traced back to \cite[$\S$6.4]{Heber1998}. Due to our needs in the present article, and in order to simplify the presentation, we have decided to  avoid discussing real GIT, referring instead the interested reader to \cite{RS90,HS07, EbJbl09,realGIT} and the references therein.

It was observed in \cite[Prop.~3.5]{minimal} that in terms of the Lie bracket $\mu\in \Lambda^2(\ngo^*)\otimes \ngo $ of $\ngo$, the Ricci curvature of a nilmanifold satisfies 
\begin{equation}\label{eqn_Ricmu}
 	\la \Ricci_\mu , A \ra = \unc \la  \pi(A) \mu, \mu\ra = \tfrac18 \,  \ddt\big|_0 \, \left\Vert \exp(t A) \cdot \mu \right\Vert^2, \qquad  	A\in \End(\ngo),
\end{equation}
where $\pi : \End(\ngo) \to \End(\Lambda^2(\ngo^*)\otimes \ngo)$ is the Lie algebra representation determined by differentiating the Lie group action \eqref{eqn_action} at the identity, and explicitly given by $\left(\pi(A)\mu\right) (\cdot,\cdot) := A \mu(\cdot, \cdot) - \mu(A \cdot, \cdot) - \mu(\cdot, A \cdot)$. 


The next result is a simple generalisation of one of the directions in the Kempf-Ness theorem (see \cite{KN79}, and \cite{RS90} for the $\RR$-version). We point out that neither the representation space $\Lambda^2(\ngo^*)\otimes \ngo$ nor the fact that $\ngo$ is nilpotent are essential here: one simply replaces the Ricci curvature by the real GIT moment map, and obtains a similar result for  closed orbits in appropriate representations spaces of real reductive Lie groups (cf.~\cite{realGIT}).


\begin{proposition}\label{prop_closedzeroes}
Let $\mu \in \Lambda^2(\ngo^*)\otimes \ngo$ be the Lie bracket of a nilpotent Lie algebra and $\G \subset \Gl(\ngo)$ a Lie subgroup with Lie algebra $\ggo$. Assume that the orbit $\G \cdot \mu$ is closed. Then, there exists a bracket $\bar \mu \in \G \cdot \mu$ whose Ricci curvature satisfies $\Ricci_{\bar \mu} \perp A$ for all $A\in \ggo$.
\end{proposition}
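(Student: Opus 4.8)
The plan is to minimize the norm functional $\|h \cdot \mu\|^2$ over the closed orbit $\G \cdot \mu$, and to identify the critical point of this minimization with the desired bracket $\bar\mu$. Concretely, I would consider the smooth function $f : \G \to \RR$ given by $f(h) := \|h \cdot \mu\|^2$, where the norm is the one induced on $\Lambda^2(\ngo^*) \otimes \ngo$ by the background scalar product $\ip$. Since the orbit $\G \cdot \mu$ is closed and the norm-squared function is proper and bounded below on $\Lambda^2(\ngo^*)\otimes \ngo$ (it is a nonnegative continuous function tending to $+\infty$ as $\|\cdot\| \to \infty$), its restriction to the closed set $\G\cdot\mu$ attains a minimum at some point $\bar\mu = \bar h \cdot \mu \in \G \cdot \mu$. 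This is the crucial place where closedness of the orbit is used: on a non-closed orbit the infimum might only be approached in the closure and not attained.

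Once I have a minimizer $\bar\mu \in \G \cdot \mu$ of $h \mapsto \|h\cdot\mu\|^2$, I would extract the first-order condition. For any $A \in \ggo$, the curve $t \mapsto \exp(tA) \cdot \bar\mu$ lies in the orbit $\G \cdot \mu$ and passes through $\bar\mu$ at $t=0$. Since $\bar\mu$ minimizes the norm over the orbit, the function $t \mapsto \|\exp(tA)\cdot\bar\mu\|^2$ has a critical point at $t=0$, so its derivative there vanishes. Applying formula \eqref{eqn_Ricmu} with $\bar\mu$ in place of $\mu$, this derivative is precisely $8 \la \Ricci_{\bar\mu}, A \ra$, and hence
\[
	\la \Ricci_{\bar\mu}, A \ra = \tfrac18 \, \ddt\big|_0 \, \left\Vert \exp(tA) \cdot \bar\mu \right\Vert^2 = 0
\]
for every $A \in \ggo$. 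This is exactly the assertion $\Ricci_{\bar\mu} \perp A$ for all $A \in \ggo$, completing the argument.

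The main subtlety, rather than a genuine obstacle, is to confirm that formula \eqref{eqn_Ricmu} applies verbatim at the point $\bar\mu$ and with the given (possibly non-reductive) Lie algebra $\ggo$. The formula \eqref{eqn_Ricmu} is an identity valid for any Lie bracket and any $A \in \End(\ngo)$, and since $\ggo \subset \End(\ngo)$ we may simply restrict $A$ to range over $\ggo$; no reductivity of $\G$ is needed here, which is precisely the point emphasized in the remark preceding the proposition. I would also note that since $\bar\mu$ is itself the bracket of a nilpotent Lie algebra (it is isomorphic to $(\ngo,\mu)$ via $\bar h$), the formula is applicable to it directly. The only real care required is to make sure the derivative computation in \eqref{eqn_Ricmu} is read in the correct direction, i.e.~that $\ddt\big|_0 \|\exp(tA)\cdot\bar\mu\|^2 = 8\la \Ricci_{\bar\mu}, A\ra$ with the stated constant, which follows immediately by substituting $\bar\mu$ for $\mu$ in \eqref{eqn_Ricmu}.
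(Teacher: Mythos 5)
Your proposal is correct and follows essentially the same route as the paper: both minimise the norm over the closed orbit $\G\cdot\mu$ (closedness guaranteeing the minimum is attained) and then read off the first-order criticality condition via formula \eqref{eqn_Ricmu} to obtain $\la \Ricci_{\bar\mu}, A\ra = 0$ for all $A\in\ggo$. Your additional remarks --- properness of $\Vert\cdot\Vert^2$, the curve $\exp(tA)\cdot\bar\mu$ staying in the orbit, and the applicability of \eqref{eqn_Ricmu} to the nilpotent bracket $\bar\mu$ without any reductivity assumption on $\G$ --- correctly fill in details the paper leaves implicit.
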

\begin{proof}
For the closed set $\G\cdot \mu$ there exists $\bar\mu \in \G \cdot \mu$ minimising the distance to the origin. 
In particular, $\bar \mu$ is a critical point for $\Vert \cdot \Vert^2 \big|_{\G\cdot \mu}$, thus \eqref{eqn_Ricmu} gives $\la \Ricci_{\bar\mu}, A \ra = 0$ for all $A \in \ggo$.
\end{proof}

The following  consequence of Lemma \ref{lem_closedorb} and Proposition \ref{prop_closedzeroes} yields the existence of a left-invariant metric whose Ricci endomorphism has a `large' kernel:


\begin{corollary}\label{cor_Riczero} 
Let $\ngo = \vg_1 \oplus \vg_2 \oplus \vg_3$ be an orthogonal decomposition with 
\[
	\vg_1 = \zg(\ngo,\mu) \cap \mu(\ngo,\ngo)^\perp, \qquad (\vg_1 \oplus \vg_2) + \mu(\ngo,\ngo) = \ngo, \qquad   \zg(\ngo,\mu) \subset \vg_1 \oplus \vg_2.
\]  
Then, there exists  $\bar \mu \in \Gv \cdot \mu$ such that  $\vg_1 \oplus \vg_3 \subset \ker \Ricci_{\bar\mu}$.
\end{corollary}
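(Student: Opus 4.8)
The plan is to feed the corollary's hypotheses directly into Lemma \ref{lem_closedorb} and Proposition \ref{prop_closedzeroes}, and then to read off the kernel from the resulting orthogonality relation together with Lemma \ref{lem_kerRic}. The assumptions $(\vg_1 \oplus \vg_2) + \mu(\ngo,\ngo) = \ngo$ and $\zg(\ngo,\mu) \subset \vg_1 \oplus \vg_2$ are precisely what Lemma \ref{lem_closedorb} requires, so the orbit $\Gv \cdot \mu$ is closed. Proposition \ref{prop_closedzeroes}, applied with $\G = \Gv$, then produces $\bar\mu = h\cdot\mu \in \Gv\cdot\mu$ (with $h \in \Gv$) whose Ricci endomorphism satisfies $\la \Ricci_{\bar\mu}, A\ra = 0$ for every $A \in \ggov$. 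It remains to translate this orthogonality, together with the structure of $\bar\mu$, into the inclusion $\vg_1\oplus\vg_3 \subset \ker\Ricci_{\bar\mu}$; the two summands will come from genuinely different mechanisms.

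For the summand $\vg_1$ I would invoke Lemma \ref{lem_kerRic} applied to the bracket $\bar\mu$ with the fixed background scalar product $\ip$, for which it suffices to show $\vg_1 \subset \zg(\ngo,\bar\mu)\cap \bar\mu(\ngo,\ngo)^\perp$. Since $h$ fixes $\vg_1$ pointwise, $\vg_1 \subset \zg(\ngo,\mu)$, and $\zg(\ngo, h\cdot\mu) = h(\zg(\ngo,\mu))$, we get $\vg_1 = h(\vg_1) \subset \zg(\ngo,\bar\mu)$. For the orthogonality, note $\bar\mu(\ngo,\ngo) = h(\mu(\ngo,\ngo))$, so $v \perp \bar\mu(\ngo,\ngo)$ for $v\in\vg_1$ amounts to $h^t v \in \mu(\ngo,\ngo)^\perp$. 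Using that $\ngo = \vg_1\oplus\vg_2\oplus\vg_3$ is orthogonal and that $h$ is block-triangular with $h|_{\vg_1\oplus\vg_2} = \Id$ and $h(\vg_3)\subset\vg_2\oplus\vg_3$, one checks that $h^t$ preserves $\vg_1$: for $u\in\vg_1$ and $v\in\vg_2\oplus\vg_3$ one has $\la h^t u, v\ra = \la u, hv\ra = 0$ since $hv\in\vg_2\oplus\vg_3$, whence $h^t u\in\vg_1$. Thus $h^t(\vg_1) = \vg_1 \subset \mu(\ngo,\ngo)^\perp$, and Lemma \ref{lem_kerRic} yields $\vg_1 \subset \rad\Ricci_{\bar\mu} = \ker\Ricci_{\bar\mu}$.

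For the summand $\vg_3$ I would use the orthogonality relation. Write $\Ricci_{\bar\mu}$ in block form with respect to the orthogonal decomposition; it is symmetric, being self-adjoint for $\ip$. A general element of $\ggov$ vanishes on $\vg_1\oplus\vg_2$ and sends $\vg_3$ into $\vg_2\oplus\vg_3$, so it has only two free blocks, $\vg_3\to\vg_2$ and $\vg_3\to\vg_3$. Computing $\la\Ricci_{\bar\mu},A\ra = \tr(\Ricci_{\bar\mu}A^t)$ and letting these blocks range freely, the vanishing of the inner product forces the $\vg_3\to\vg_2$ and $\vg_3\to\vg_3$ blocks of $\Ricci_{\bar\mu}$ to be zero. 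Combined with $\vg_1 \subset \ker\Ricci_{\bar\mu}$ from the previous step, which by symmetry also kills the $\vg_3\to\vg_1$ block, the entire $\vg_3$-column of $\Ricci_{\bar\mu}$ vanishes, so $\vg_3 \subset \ker\Ricci_{\bar\mu}$, completing the proof.

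The main obstacle is the orthogonality step for $\vg_1$: because $h$ need not preserve the scalar product, $\bar\mu(\ngo,\ngo)^\perp$ will in general not contain $\vg_1$, and one must exploit the specific block-triangular shape of elements of $\Gv$, together with the orthogonality of the chosen decomposition, to see that $h^t$ still fixes $\vg_1$. Everything else is direct book-keeping of blocks, and the fact that the inclusions for $\vg_1$ and $\vg_3$ rest on different mechanisms — Lemma \ref{lem_kerRic} for $\vg_1$ and the critical-point orthogonality for $\vg_3$ — is exactly why both conditions on the decomposition are imposed.
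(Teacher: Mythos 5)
Your proposal is correct and follows essentially the same route as the paper: closedness of $\Gv\cdot\mu$ via Lemma \ref{lem_closedorb}, a norm-minimising bracket $\bar\mu$ from Proposition \ref{prop_closedzeroes}, Lemma \ref{lem_kerRic} for the summand $\vg_1$, and the orthogonality $\Ricci_{\bar\mu}\perp\ggov$ for the summand $\vg_3$. Your two variations --- verifying $h^t(\vg_1)\subset\vg_1$ where the paper instead observes that $\Gv$ preserves $\vg_2\oplus\vg_3$ (dual statements for an orthogonal decomposition), and letting the free blocks $\vg_3\to\vg_2$, $\vg_3\to\vg_3$ of $A\in\ggov$ range where the paper tests against the rank-one operator $A=\la\cdot,X\ra\,\Ricci_{\bar\mu}X$ --- are only cosmetic reformulations of the same mechanisms.
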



\begin{proof}
Lemma \ref{lem_closedorb} implies that the orbit $\Gv \cdot \mu$ is closed, and from Proposition \ref{prop_closedzeroes} applied to the Lie subgroup $\Gv$ we deduce the existence of $\bar \mu \in \Gv \cdot \mu$ such that $\Ricci_{\bar \mu} \perp \ggov$. Let us show that this $\bar \mu$ satisfies the  above stated property.

Firstly, since the elements of $\Gv$ act trivially on $\zg(\ngo,\mu) \subset \vg_1 \oplus \vg_2$, and they preserve $\vg_2\oplus \vg_3$, we have that $\vg_1 \subset \zg(\ngo,\bar \mu) \cap \bar\mu(\ngo,\ngo)^\perp$. Therefore, Lemma \ref{lem_kerRic} implies  that $\vg_1 \subset \ker \left(\Ricci_{\bar \mu}\right)$.
To conclude the proof, let us see that $\vg_3 \subset \ker \Ricci_{\bar \mu}$. Let $X\in \vg_3$, $\Vert X \Vert = 1$, and consider   $A\in \End(\ngo)$ with $A X = \Ricci_{\bar \mu} X$, $A Y = 0$ for all $Y\perp X$. Since $\vg_1 \subset \ker \Ricci_{\bar \mu}$, we must have $\Ricci_{\bar \mu} X \perp \vg_1$, from which $A\in \ggov$ and therefore  $0 = \tr A \Ricci_{\bar \mu} = \Vert \Ricci_{\bar \mu} X \Vert^2$.
\end{proof}

\section{The linearisation of the Ricci curvature}\label{sec_linRic}

Let $\pg:= \{A\in \glg(\ngo) : A = A^T \}$. For a subspace $\sg \subset \pg$ we denote by $\pr_\sg : \pg \to \sg$ the corresponding orthogonal projection. 

 We view the Ricci endomorphism as a map 
\[
	\Ricci : \Gl(\ngo) \cdot \bar \mu \to \pg,  \qquad h\cdot  \bar \mu \mapsto \Ricci_{h\cdot \bar\mu}.
\]
Let $L_{\bar \mu} : \pg \to \pg$ be the linear map given by
\begin{equation}\label{eqn_defL}
		L_{\bar \mu}(E) = {\rm d} \Ricci|_{\bar \mu} (\pi(E)\bar\mu).
\end{equation}
Recall that $T_{\bar \mu} (\Gl(\ngo)\cdot \bar \mu) = \{ \pi(E)\bar\mu : E \in \glg(\ngo) \}$.

Notice that $L_{\bar \mu}$ is self-adjoint. Indeed, if $\bar\mu(t) := \exp( t E) \cdot \bar \mu$, then by \eqref{eqn_Ricmu} we have
\begin{align*}
	\la  L_{\bar \mu} E, F\ra &= \,\, \ddt\big|_{t=0} \la \Ricci_{\bar \mu(t)}, F  \ra 
		= \ddt\big|_{t=0} \unc \, \la \pi(F) \bar \mu(t), \bar \mu(t) \ra \\
		&= \,\, \unc \, \la \pi(F) \pi(E)\bar \mu, \bar \mu \ra + \unc \, \la \pi(F) \bar \mu, \pi(E)\bar \mu \ra
		= \unm \, \la \pi(E) \bar \mu, \pi(F) \bar \mu\ra,
\end{align*}
for any $E, F\in \pg$, where in the last equality we have used the fact that $\pi(F)^T = \pi(F^T) = \pi(F)$. This computation also shows that $L_{\bar \mu}$ is positive semi-definite, with 
\[
	\ker L_{\bar \mu} = \Der(\bar \mu) \cap \pg.
\]
Indeed, $\Der(\bar \mu) = \{ E\in \glg(\ngo) : \pi(E)\bar \mu  = 0\}$. In particular, we have

\begin{lemma}\label{lem_Lsurj}
The projection $\pr_\sg \circ L_{\bar \mu} : \pg \to \sg$ is surjective if and only if $\sg \cap \Der(\bar \mu) = 0$.
\end{lemma}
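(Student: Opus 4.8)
The plan is to reduce the statement to an elementary fact about self-adjoint operators, leaning entirely on the two properties of $L_{\bar\mu}$ established above: it is self-adjoint, and $\ker L_{\bar\mu} = \Der(\bar\mu)\cap\pg$. Since $\sg\subset\pg$, the latter immediately gives $\sg\cap\Der(\bar\mu) = \sg\cap\ker L_{\bar\mu}$, so it suffices to prove that $\pr_\sg\circ L_{\bar\mu}\colon\pg\to\sg$ is surjective if and only if $\sg\cap\ker L_{\bar\mu}=0$.

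To this end I would compute the orthogonal complement (inside $\sg$) of the image $\pr_\sg\big(L_{\bar\mu}(\pg)\big)$. A vector $v\in\sg$ lies in this complement precisely when $\la v, \pr_\sg L_{\bar\mu} E\ra = 0$ for all $E\in\pg$. Because $v\in\sg$, the projection may be dropped, $\la v, \pr_\sg L_{\bar\mu}E\ra = \la v, L_{\bar\mu}E\ra$, and self-adjointness of $L_{\bar\mu}$ rewrites this as $\la L_{\bar\mu}v, E\ra$. As $E$ ranges over all of $\pg$, the latter vanishes identically iff $L_{\bar\mu}v=0$. Hence the orthogonal complement of the image inside $\sg$ is exactly $\sg\cap\ker L_{\bar\mu}$, and the map is surjective iff this intersection is trivial. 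Combined with the first paragraph, this yields the lemma.

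An equivalent route is to observe that the adjoint of $\pr_\sg\circ L_{\bar\mu}$ is the map $s\mapsto L_{\bar\mu}s$ on $\sg$ (since the adjoint of $\pr_\sg$ is the inclusion $\sg\hookrightarrow\pg$ and $L_{\bar\mu}$ is self-adjoint), so surjectivity of $\pr_\sg\circ L_{\bar\mu}$ is equivalent to injectivity of this adjoint, i.e.\ to $\sg\cap\ker L_{\bar\mu}=0$. Either way there is essentially no genuine obstacle: the only point demanding care is the identity $\la v,\pr_\sg w\ra = \la v,w\ra$ for $v\in\sg$, which is just the defining property of the orthogonal projection, together with correctly tracking which space each map lands in. All the real content is carried by the two already-proven properties of $L_{\bar\mu}$.
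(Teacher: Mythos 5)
Your proof is correct and takes essentially the same route as the paper's: both show that the orthogonal complement of the image of $\pr_\sg \circ L_{\bar \mu}$ inside $\sg$ equals $\sg \cap \ker L_{\bar \mu} = \sg \cap \Der(\bar \mu)$, using only the self-adjointness of $L_{\bar \mu}$ and the already-established identity $\ker L_{\bar \mu} = \Der(\bar \mu)\cap \pg$. The adjoint reformulation in your final paragraph is the same computation in different packaging, so there is nothing further to add.
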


\begin{proof}
We will show that the orthogonal complement of $\pr_\sg \circ L_{\bar \mu} (\pg)$ in $\sg$ equals $\sg \cap \Der(\bar \mu)$. If $S \in \sg$ belongs to the former, then also $S\perp L_{\bar \mu}(\pg)$. Since $L_{\bar \mu}$ is self-adjoint, this implies that $S\in \ker L_{\bar \mu} = \Der(\bar \mu) \cap \pg$, so $S \in \sg \cap \Der(\bar \mu)$. Conversely, let $S\in \sg \cap \Der(\bar \mu)$. Then $S\in \ker L_{\bar \mu}$, thus for any $E\in \pg$ we have that
\[
		\la \pr_{\sg} \circ L_{\bar \mu}(E) , S \ra = \la L_{\bar \mu}(E), S \ra = \la E, L_{\bar \mu}(S) \ra = 0.
\]
\end{proof}

\section{Proof of Theorem \ref{thm_main}}\label{sec_proof}











By Lemma \ref{lem_easydirection} we know that one of the inclusions holds. Let us now show that  any triple as in the theorem's statement can be realised as the signature of the Ricci curvature of some left-invariant metric.

Any nilpotent Lie algebra may written as 
\[
	\ngo = \ag \oplus \ngo_1, \qquad  \zg(\ngo,\mu) = \ag \oplus \zg_1, \qquad \zg_1:= \zg(\ngo,\mu) \cap \mu(\ngo,\ngo).
\] 
The subspaces $\ag$, $\ngo_1$ are nilpotent ideals, $\ag$ is central, and  we have a Lie algebra direct sum $\ngo \simeq \RR^a \oplus \ngo_1$. Clearly, $\mu(\ngo,\ngo)\subset \ngo_1$.  Choose direct complements $\ug$ of $\mu(\ngo,\ngo)$ in $\ngo_1$, and $\mg$ of $\zg_1$ in $\mu(\ngo,\ngo)$, so that we have the decompositions
\[
	\ngo = \ag \oplus \overbrace{\ug \oplus \underbrace{ \mg \oplus \zg_1}_{\mu(\ngo,\ngo)}}^{\ngo_1}, \qquad \zg(\ngo,\mu) = \ag \oplus \zg_1.
\]
Let us fix an inner product $\ip$ on $\ngo$ making the above decompositions orthogonal.

We begin by making the following reduction. Choose an integer $r \in [0, \min(a,m)]$ and consider any orthogonal decomposition $\ag = \ag_0 \oplus \ag_1$ into subspaces, where $\dim \ag_0 = a-r$, $\dim \ag_1 = r$. Then $\ngo = \ag_0 \oplus (\ag_1 \oplus \ngo_1)$, with $\tilde \ngo := \ag_1 \oplus \ngo_1$ a nilpotent ideal. The  simply-connected  Lie group $\N$ with left-invariant metric $g$ corresponding to $(\ngo,\ip)$  decomposes as a Riemannian product $\N = \RR^{a-r} \times \tilde \N$, where the first factor is Euclidean (flat) and the second one is the simply-connected Lie group with Lie algebra $\tilde \ngo$, endowed with the corresponding left-invariant metric $\tilde g$. We clearly have 
\[
		\sigma(\Ricci(g)) = (0,a-r,0) + \sigma(\Ricci(\tilde g)).
\]
The theorem will follow if we show that for any triple of non-negative integers $(m^-, m^0, m^+)$ with $m^-+m^0 +m^+ = m-r$ we have 
\begin{equation}
	(u+r,0,z+r) + (m^-,m^0,m^+) \in \sigma \Ricci(\tilde \ngo).
\end{equation}
From now on and for the rest of the proof we will focus on proving this assertion. To ease notation,  we will simply ignore the subspace $\ag_0$ and assume that $\ngo = \tilde \ngo$. That is, we have
\[
	r = a := \dim \ag \leq \dim \mg
\] 
and aim to prove that for all $(m^-, m^0, m^+) \in  \ZZ_{\geq 0}^3$ with $m^-+m^0 +m^+ = m-a$ we have 
\begin{equation}\label{eqn_toshow}
	(u+a,0,z+a) + (m^-,m^0,m^+) \in \sigma \Ricci(\ngo).
\end{equation}

\vs

In terms of our fixed background scalar product $\ip$ on $\ngo$, we  may parametrise all scalar products on $\ngo$ via $\iph$, $h\in \Gl(\ngo)$. Recall that by \eqref{eqn_Ric_mu}, the endomorphism $\Ricci_{h\cdot \mu}$ represents the bilinear form $\Ricci_\iph$ in a certain basis. Thus,  Sylverster's law of intertia allows us to compute the signature of $\Ricci_\iph$ by looking at the signs of the eigenvalues of  $\Ricci_{h\cdot \mu}$. Throughout the proof, when refering to the signature of $\Ricci_{h\cdot \mu}$, which we will write simply as $\sigma\left(\Ricci_{h\cdot \mu} \right)$, we will always mean the signature of the bilinear form $\la \Ricci_{h\cdot \mu} \cdot, \cdot \ra$.

The first step towards proving \eqref{eqn_toshow} is to establish that $(u,a+m,z) \in \sigma\Ricci(\ngo)$. To  that end, we apply Corollary \ref{cor_Riczero} to the subspaces $\vg_1 = \ag$, $\vg_2 = \ug\oplus \zg_1$, $\vg_3 = \mg$. This yields a bracket $\bar \mu\in \Gv \cdot \mu$ with $\ag \oplus \mg \subset \ker \Ricci_{\bar \mu}$.
In other words, $s^0 \geq a+m$, where $(s^-, s^0, s^+) = \sigma(\Ricci_{\bar \mu})$. On the other hand, $s^- \geq u$, $s^+ \geq z$ by Lemma \ref{lem_easydirection}.
Since $a+m + u + z = \dim \ngo$ we conclude that in fact we have $\sigma(\Ricci_{\bar \mu}) = (u,a+m,z)$.

The strategy  is now to prove that local variations of $\bar\mu$ within the orbit $\Gl(\ngo)\cdot \bar\mu$ yield  all claimed Ricci signatures. Thus, we study  $\sigma(\Ricci_{h\cdot \bar \mu})$ for $h \in \Gl(\ngo)$ in a neighbourhood of the identity. More precisely, we only consider $h = \exp(E)$, for $E\in \pg$ in a neighbourhood of $0$. According to the decomposition $\ngo = \vg_1 \oplus \vg_2 \oplus \vg_3$ we have 
\[
		\Ricci_{\bar \mu} = 
		\left[ \begin{matrix}
			0 & 0 & 0\\
			0 & R & 0\\
			0 & 0 & 0
		\end{matrix}
		\right],
\]
with $R$ non-singular and $\sigma(R) = (u,0,z)$. Let us write
\[
	\Ricci_{\exp(E)\cdot \bar\mu} = \Ricci_{\bar \mu} + A(E) =  
	\left[
		\begin{matrix}
			A_{11}(E) & A_{12}(E) & A_{13}(E) \\
			A_{12}(E)^T & R + A_{22}(E) & A_{23}(E) \\
			A_{13}(E)^T & A_{23}(E)^T & A_{33}(E) 
		\end{matrix}
	\right],
\]
with  $A(0) = 0$, $A_{i,j}(E) : \vg_j \to \vg_i$, $A_{ii}(E)^T = A_{ii}(E)$, and $E\mapsto A(E)$ a smooth map whose differential is given by
\begin{equation}\label{eqn_dA}
	{\rm d} A|_0 (E) = L_{\bar \mu}(E) = {\rm d} \Ricci|_{\bar \mu} (\pi(E)\bar\mu), \qquad E\in \pg.
\end{equation}
For small enough $E$ the operator $R+A_{22}(E)$ is invertible, allowing us to change basis with the endomorphism
\[
	Q:= 	\left[ 
	\begin{matrix}
			\Id & 0 & 0 \\
			-(R+ A_{22}(E))^{-1} A_{12}(E)^T &\Id & - (R + A_{22}(E))^{-1} A_{23}(E)\\ 
			0 & 0 & \Id
	\end{matrix}
	\right]
\]
to obtain an operator whose signature is easier to compute:
\[
	Q^T \, \Ricci_{\exp(E) \cdot \bar \mu} \, Q = 
	\left[
		\begin{matrix}
			X_{11}(E) 	& 0 			& X_{13}(E) \\
			0			& R + A_{22}(E) & 0 \\
			X_{13}(E)^T & 0				& X_{33}(E) 
		\end{matrix}
	\right].
\]
Here, $X_{11}, X_{13}, X_{33}$ are defined by
\begin{align*}
	X_{11} &:= \,\,  A_{11} - A_{12} \, (R+A_{22})^{-1} \, A_{12}^T  \,  : \,  \vg_1 \to \vg_1 \, , \\
	X_{13} &:=  \,\, A_{13} - A_{12} \, (R+A_{22})^{-1} \, A_{23}\,  : \, \vg_3 \to \vg_1 \, , \\
	X_{33} &:= \,\, A_{33} - A_{23}^T \, (R+A_{22})^{-1} \, A_{23} \, : \, \vg_3 \to \vg_3.
\end{align*}
A straightforward computation shows that their first variations are 
\begin{equation}\label{eqn_dXij}
		({\rm d} X_{i,j}) \big|_0 (E) = ({\rm d} A_{i,j})\big|_0 (E), \qquad \forall \, i,j\in\{1,3\}, \qquad E\in \pg.
\end{equation}
In order to compute the signature of $Q^T \, \Ricci_{\exp(E) \cdot \bar \mu} \, Q $, we need to understand $\sigma(X(E))$, where $X(E): \vg_1 \oplus \vg_3 \to \vg_1 \oplus \vg_3$ is given by
\[
	X(E) = \left[
		\begin{matrix}
			X_{11}(E) 		& X_{13}(E) \\
			X_{13}(E)^T 	& X_{33}(E) 
		\end{matrix}
	\right].
\]

Let $\sg := \{ A\in \pg  : A|_{\vg_2} = 0, A(\vg_1) \subset \vg_3 \}$, and  notice that $X_{33}(E)$, $X_{13}(E) + X_{13}(E)^T \in \sg$.

\begin{lemma}\label{lem_sDer0}
We have that $\sg \cap \Der(\bar \mu) = 0$.
\end{lemma}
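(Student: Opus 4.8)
The plan is to show that any $D \in \sg \cap \Der(\bar\mu)$ must vanish, exploiting that $\sg \subset \pg$ consists of symmetric operators, so $D$ is semisimple with real eigenvalues, together with the fact that $\bar\mu$ lies in the orbit $\Gv \cdot \mu$. First I would record two structural facts about $\bar\mu$. Writing $\bar\mu = h \cdot \mu$ with $h \in \Gv$, and using that $h$ fixes $\vg_1 \oplus \vg_2$ pointwise and is invertible, applying $h$ to the hypothesis $(\vg_1 \oplus \vg_2) + \mu(\ngo,\ngo) = \ngo$ of Corollary \ref{cor_Riczero} immediately yields $(\vg_1 \oplus \vg_2) + \bar\mu(\ngo,\ngo) = \ngo$. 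Moreover, since $h$ fixes $\zg(\ngo,\mu) \subset \vg_1 \oplus \vg_2$ and the center transforms equivariantly under the action, $\zg(\ngo,\bar\mu) = h\,\zg(\ngo,\mu) = \zg(\ngo,\mu)$; in particular $\vg_1 \subset \zg(\ngo,\bar\mu) \subset \vg_1 \oplus \vg_2$.

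The first step is a reduction showing $D|_{\vg_1 \oplus \vg_2} = 0$. Since any derivation preserves the center, $D(\zg(\ngo,\bar\mu)) \subset \zg(\ngo,\bar\mu)$, and in particular $D(\vg_1) \subset \zg(\ngo,\bar\mu) \subset \vg_1 \oplus \vg_2$. On the other hand, $D \in \sg$ forces $D|_{\vg_2} = 0$ and $D(\vg_1) \subset \vg_3$. As the decomposition $\ngo = \vg_1 \oplus \vg_2 \oplus \vg_3$ is orthogonal, $\vg_3 \cap (\vg_1 \oplus \vg_2) = 0$, whence $D(\vg_1) = 0$ and therefore $D|_{\vg_1 \oplus \vg_2} = 0$.

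The core of the argument is then a weight/grading computation. Because $D$ is symmetric it is diagonalisable over $\RR$, giving an eigenspace decomposition $\ngo = \bigoplus_\lambda \ngo_\lambda$ which, $D$ being a derivation, satisfies $\bar\mu(\ngo_\lambda, \ngo_{\lambda'}) \subset \ngo_{\lambda + \lambda'}$. The derived ideal $\bar\mu(\ngo,\ngo)$ is $D$-invariant, so by semisimplicity this grading descends to $\ngo/\bar\mu(\ngo,\ngo)$, on which $D$ acts by the scalar $\lambda$ on the image of each $\ngo_\lambda$. Now $D|_{\vg_1 \oplus \vg_2} = 0$ together with $(\vg_1 \oplus \vg_2) + \bar\mu(\ngo,\ngo) = \ngo$ shows that $D$ induces the zero map on $\ngo/\bar\mu(\ngo,\ngo)$; hence $\ngo_\lambda \subset \bar\mu(\ngo,\ngo)$ for every $\lambda \neq 0$, and consequently $\ngo_0 + \bar\mu(\ngo,\ngo) = \ngo$. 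Since $\ngo_0$ is a subalgebra, Lemma \ref{lem_subalgebra} forces $\ngo_0 = \ngo$, i.e.\ $0$ is the only eigenvalue of $D$. A semisimple operator with single eigenvalue $0$ is zero, so $D = 0$, as desired.

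I expect the main obstacle to be organising this weight argument cleanly: checking that the eigenvalue grading descends to the abelianisation $\ngo/\bar\mu(\ngo,\ngo)$ and correctly invoking Lemma \ref{lem_subalgebra} to annihilate all nonzero weights simultaneously (thereby avoiding any case split on the sign of the eigenvalues). By contrast, the reduction $D|_{\vg_1 \oplus \vg_2} = 0$, although essential to feed the core argument, is routine bookkeeping relying only on the orthogonality of the decomposition and the invariance of the center under derivations.
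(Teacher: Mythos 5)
Your proof is correct, but it takes a genuinely different route from the paper's. The paper conjugates back to the original bracket: writing $\bar \mu = h\cdot \mu$ with $h\in \Gv$, it sets $D := h^{-1}\tilde D h \in \Der(\mu)$, observes that $\hg := \ker D \cap \ngo_1$ is a subalgebra of the ideal $\ngo_1$ containing $\vg_2$, applies Lemma \ref{lem_subalgebra} \emph{inside} $\ngo_1$ (using $\vg_2 + \mu(\ngo_1,\ngo_1) = \ngo_1$) to get $\ngo_1 = \vg_2\oplus\vg_3 \subset \ker D$, transfers this back to $\tilde D$ via the $\Gv$-invariance of $\ngo_1$, and only at the very end kills $\vg_1$ by combining self-adjointness (which forces $\tilde D(\vg_1)\subset \vg_1$) with the constraint $\tilde D(\vg_1)\subset\vg_3$ from the definition of $\sg$. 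You instead work entirely with $\bar\mu$, first transporting the hypotheses along the action --- both $(\vg_1\oplus\vg_2)+\bar\mu(\ngo,\ngo)=\ngo$ and $\zg(\ngo,\bar\mu)=\zg(\ngo,\mu)$ are correct, since $h$ fixes $\vg_1\oplus\vg_2$ pointwise and the center is equivariant --- then eliminate $\vg_1$ \emph{first}, using the general fact that derivations preserve the center, and finally apply Lemma \ref{lem_subalgebra} to the whole algebra $(\ngo,\bar\mu)$. Both arguments hinge on Lemma \ref{lem_subalgebra}; yours exploits the structure of $\sg$ through center-preservation at the start rather than through orthogonality at the end, which makes the order of elimination arguably more transparent and avoids the conjugation bookkeeping, at the cost of having to verify the transported facts about $\bar\mu$.

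One remark: your weight/grading computation, while correct, is an unnecessary detour. Once you know $D|_{\vg_1\oplus\vg_2}=0$, note that $\ker D$ is itself a subalgebra of $(\ngo,\bar\mu)$ (kernels of derivations are subalgebras, as you in effect use for $\ngo_0$), it contains $\vg_1\oplus\vg_2$, and hence $\ker D + \bar\mu(\ngo,\ngo)=\ngo$; Lemma \ref{lem_subalgebra} then yields $\ker D = \ngo$ directly, with no need for the eigenspace decomposition, its descent to $\ngo/\bar\mu(\ngo,\ngo)$, or the semisimplicity of $D$. This shortcut is essentially how the paper uses the kernel, albeit inside the ideal $\ngo_1$.
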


\begin{proof}
Let $\tilde D\in \sg \cap \Der(\bar \mu)$. Then $\tilde D|_{\vg_2} = 0$. Write $\bar \mu = h\cdot \mu$, $h \in \Gv$, so that $D:= h^{-1} \tilde D h \in \Der(\mu)$. By definition of derivation, $\ker D \subset \ngo$ is a Lie subalgebra. Therefore $\hg:= \ker D \cap \ngo_1$ is a subalgebra of the ideal $\ngo_1$. Since  $\Gv$ acts trivially on $\vg_2$ we also have that $\vg_2 \subset \hg$.  Now $\hg + \mu(\ngo_1,\ngo_1) = \ngo_1$, and by Lemma \ref{lem_subalgebra} we conclude that $\hg = \ngo_1$. This implies that $\ngo_1 = \vg_2 \oplus \vg_3 \subset \ker D$. Now $\Gv$ preserves $\ngo_1$, from which we deduce that also $\vg_2 \oplus \vg_3 \subset \ker \tilde D$. Since $\tilde D$ is self-adjoint,  the later gives $\tilde D(\vg_1) \subset \vg_1$. By definition of $\sg$, this yields $\tilde D = 0$ and concludes the proof.
\end{proof}

By Lemmas \ref{lem_Lsurj} and \ref{lem_sDer0}, the orthogonal projection of $L_{\bar \mu}$ onto $\sg$ is surjective. Using \eqref{eqn_dA}, \eqref{eqn_dXij} and the Implicit Function Theorem, this implies that for some neighbourhood $\Uca$ of $0$ in $\pg$, the images $X_{13}(\Uca)$ and $X_{33}(\Uca)$ contain $0$ as an interior point. In other words, they  attain any given value whose norm is sufficiently small. As we will see, this is enough for constructing $E \in \pg$ such that $\Ricci_{\exp(E)\cdot \bar\mu}$ has the desired signature.

Indeed, consider an arbitrary orthogonal decomposition $\mg = \mg_1 \oplus \mg_2$, with $\dim \mg_1 = a$. Let $(m^-, m^0, m^+) \in \ZZ_{\geq 0}^3$ with $m^- + m^0 + m^+ = m-a$, and choose a self-adjoint endomorphism $Y: \mg_2 \to \mg_2$ such that $\sigma(Y) = (m^-, m^0, m^+)$. Finally, choose a self-adjoint linear isomorphism $ \ag \simeq \mg_1$. Then, by the above reasoning, there exists $E\in \pg$ such that
\[
		X(E) = \left[
		\begin{matrix}
			X_{11}(E) 		& \Id  & 0 \\
			\Id & 0 & 0 \\
			0  & 0	& Y 
		\end{matrix}
	\right],
\]
with blocks according to the decomposition $\ag \oplus \mg_1 \oplus \mg_2$.  Clearly, we may assume $E$ is small enough so that $\sigma(R+A_{22}(E)) = \sigma(R)$. Finally, we have
\begin{align*}
	\sigma(\Ricci_{\exp(E)\cdot \bar \mu}) &=  \,\, \sigma(Q^T \Ricci_{\exp(E)\cdot \bar \mu} Q) =     \sigma(R) + \sigma(X(E))  \\
	 & = \,\, \sigma(R) + \sigma(Y) + \sigma \left(  \left[
		\begin{matrix}
			X_{11}(E) & \Id \\
			\Id & 0 
		\end{matrix} \right] \right) \\
	& = (u,0,z) + (m^-, m^0, m^+) + (a,0,a),
\end{align*}
and \eqref{eqn_toshow} follows. The last equality uses the fact that the signature remains invariant along a continuous path of invertible, self-adjoint operators:
\[
	\sigma \left(  \left[
		\begin{matrix}
			X_{11}(E) & \Id \\
			\Id & 0 
		\end{matrix} \right] \right) = 
		\sigma \left(  \left[
		\begin{matrix}
			t\cdot X_{11}(E) & \Id \\
			\Id & 0 
		\end{matrix} \right] \right) = 
		\sigma \left(  \left[
		\begin{matrix}
			0 & \Id \\
			\Id & 0 
		\end{matrix} \right] \right) = (a,0,a), \qquad t\in [0,1].
\] 
This concludes the proof of the theorem.

\bibliography{bib/ramlaf2}
\bibliographystyle{amsalpha}

%

\end{document}